\documentclass[preprint,a4paper,11pt,dvipsnames]{elsarticle}
\usepackage[T1]{fontenc}
\usepackage[utf8]{inputenc}
\usepackage[english]{babel}
\usepackage{amsmath,amsthm,enumerate}
\usepackage{amssymb}
\usepackage{amscd}
\usepackage{vmargin}
\usepackage{url}
\usepackage{stmaryrd}
\usepackage{tikz,pgf}
\usepackage{subfig}
\usetikzlibrary{positioning}
\usepackage{subfig}
\usepackage{soul}
\usepackage{xspace}
\usepackage{makecell}
\usepackage{diagbox}
\usepackage{array}

\usetikzlibrary{decorations.pathreplacing,calligraphy,backgrounds}

\begin{document}
\begin{frontmatter}

\title{1-2 Conjectures for Graphs with Low Degeneracy Properties}

\author[nice]{Julien Bensmail}
\author[lyon]{Beatriz Martins}
\author[lyon,shanghai]{Chaoliang Tang}

\address[nice]{Universit\'e C\^ote d'Azur, CNRS, Inria, I3S, France}
\address[lyon]{ENS de Lyon, CNRS, Université Claude Bernard Lyon 1, LIP UMR 5668, 69342 Lyon Cedex 07, France}
\address[shanghai]{Shanghai Center for Mathematical Statistics, Fudan University, 220 Handan Road, Shanghai 200433, China}

\journal{...}

\begin{abstract}
In a recent work, Keusch proved the so-called 1-2-3 Conjecture, raised by Karo\'nski, {\L}uczak, and Thomason in 2004:
for every connected graph different from $K_2$, we can assign labels~$1,2,3$ to the edges so that no two adjacent vertices are incident to the same sum of labels.
Despite this significant result, several problems close to the 1-2-3 Conjecture in spirit remain widely open.
In this work, we focus on the so-called 1-2 Conjecture, raised by Przyby{\l}o and Wo\'zniak in 2010,
which is a counterpart of the 1-2-3 Conjecture where labels~$1,2$ only can be assigned, and both vertices and edges are labelled.
We consider both the 1-2 Conjecture in its original form, where adjacent vertices must be distinguished w.r.t.~their sums of incident labels,
and variants for products and multisets.
We prove some of these conjectures for graphs with bounded maximum degree (at most~$6$)
and bounded maximum average degree (at most~$3$),
going beyond earlier results of the same sort.
\end{abstract}

\begin{keyword} 
1-2-3 Conjecture; 1-2 Conjecture; sum; product; multiset.
\end{keyword}
 
\end{frontmatter}

\newtheorem{theorem}{Theorem}[section]
\newtheorem{lemma}[theorem]{Lemma}
\newtheorem{conjecture}[theorem]{Conjecture}
\newtheorem{observation}[theorem]{Observation}
\newtheorem{claim}[theorem]{Claim}
\newtheorem{corollary}[theorem]{Corollary}
\newtheorem{proposition}[theorem]{Proposition}
\newtheorem{question}[theorem]{Question}
\newtheorem{definition}[theorem]{Definition}

\newcommand{\qedclaim}{\hfill $\diamond$ \medskip}
\newenvironment{proofclaim}{\noindent{\em Proof of the claim.}}{\qedclaim}

\newcommand{\np}{\textsf{NP}\xspace}
\newcommand{\mad}{{\rm mad}}
\newcommand{\bd}{{\rm bad}}

\newtheorem*{123c}{1-2-3 Conjecture}
\newtheorem*{12cs}{1-2 Conjecture (sum version)}
\newtheorem*{12cp}{1-2 Conjecture (product version)}
\newtheorem*{12cm}{1-2 Conjecture (multiset version)}

\newcommand{\chies}{\chi^e_{\rm S}}
\newcommand{\chiem}{\chi^e_{\rm M}}
\newcommand{\chiep}{\chi^e_{\rm P}}
\newcommand{\chits}{\chi^t_{\rm S}}
\newcommand{\chitm}{\chi^t_{\rm M}}
\newcommand{\chitp}{\chi^t_{\rm P}}


\section{Introduction}

In this work, we deal with variants of the so-called 1-2-3 Conjecture, defined over the following notions and terminology.
Let $G$ be a (simple) graph.
For some $k \geq 1$, a \textit{$k$-labelling} $\ell: E(G) \rightarrow \{1,\dots,k\}$ of $G$ is an assignment of \textit{labels} (from $\{1,\dots,k\}$) to the edges of $G$.
For every vertex $u$ of $G$, one can compute the \textit{sum} $\sigma_\ell(u)$ (or $\sigma(u)$ for short in case there are no ambiguities) of labels assigned to edges incident to $u$,
\textit{i.e.}, $\sigma_\ell(u)=\sum_{v \in N(u)} \ell (uv)$.
Now, in case $\sigma_\ell$ is a proper vertex-colouring of $G$, or, in other words, if no two adjacent vertices of $G$ get the same sum by $\ell$,
then we say that $\ell$ is \textit{sum-proper}.
We denote by $\chies(G)$ the smallest $k \geq 1$ (if any) such that $G$ admits sum-proper $k$-labellings.

These notions emerged in~\cite{KLT04}, in which Karo\'nski, {\L}uczak, and Thomason, in 2004, raised the so-called 1-2-3 Conjecture:

\begin{123c}
If $G$ is a connected graph different from $K_2$, then $\chies(G) \leq 3$.
\end{123c}

\noindent The 1-2-3 Conjecture has attracted a lot of attention since its introduction.
For the sake of keeping the current introduction short, we will not elaborate too much here on the many results on this conjecture and its variants.
Instead, we refer the reader to the most recent works on the topic, and to \textit{e.g.}~\cite{Sea12} for a survey on the field.
Let us just mention that the ``quest'' towards the 1-2-3 Conjecture reached a conclusion recently,
as Keusch provided a full solution to it~\cite{Keu24}.
This apart, a few variants of the 1-2-3 Conjecture with different (yet close) distinguishing parameters have been considered,
in particular for multisets~\cite{AADR05} and products~\cite{SK12}.
These variants have also been fully proved, in~\cite{Vuc18} and~\cite{BHLS23}, respectively.

In this work, we deal with another variant of the 1-2-3 Conjecture, being its total variant.
The main notions involved here are the following.
Let $G$ be a graph, and $\ell:V(G) \cup E(G) \rightarrow \{1,\dots,k\}$ be a \textit{total $k$-labelling} of $G$.
As earlier, for every vertex $u$ of $G$ we can define $\sigma_\ell(u)$ (or $\sigma(u)$) as the sum of labels incident to $u$,
being this time defined as $\sigma_\ell(u)=\ell(u)+\sum_{v \in N(u)} \ell (uv)$ (that is, the label assigned to $u$ is part of its sum).
Now, again, we say $\ell$ is \textit{sum-proper} if $\sigma_\ell$ is proper, and define $\chits(G)$ as the smallest $k \geq 1$ such that $G$ admits sum-proper total $k$-labellings.

Note that, for every connected graph $G$ different from $K_2$, we have $\chits(G) \leq \chies(G)$:
any sum-proper $k$-labelling of $G$ derives to a sum-proper total $k$-labelling when additionally assigning label~$1$ to all vertices.
Actually, since, when assigning strictly positive integers to edges only, degree-$1$ vertices cannot get the same sum as their unique neighbour,
it can be noted that finding a sum-proper total $k$-labelling of any graph $G$ is equivalent to finding a sum-proper $k$-labelling of the corona product of $G$ and $K_1$,
obtained from $G$ by attaching a leaf at every vertex.
Some of these reasons led Przyby{\l}o and Wo\'zniak to believe that, perhaps,
for sum-proper total $k$-labellings we should be able to use less labels in general~\cite{PW10}.

\begin{12cs}
For every graph $G$, we have $\chits(G) \leq 2$.
\end{12cs}

\noindent By an earlier remark, the 1-2 Conjecture holds directly for every graph $G$ with $\chies(G) \leq 2$.
It is known, however, that some graphs $G$ have $\chies(G) = 3$, and that, actually, deciding if $\chies(G) \leq 2$ is an \np-complete problem in general~\cite{DW11}.
Still, from previous works on the 1-2-3 Conjecture (such as~\cite{CLWY11}), we get that the 1-2 Conjecture holds for a few classes of graphs, such as trees.
In the seminal work on the 1-2 Conjecture~\cite{PW10}, the 1-2 Conjecture was further proved for complete graphs, $3$-colourable graphs, and $4$-regular graphs.
Towards the conjecture, the authors also proved that we have $\chits(G) \leq 11$ for all graphs $G$,
which was improved to $\chits(G) \leq 3$ by Kalkowski in an unpublished paper~\cite{Kal15} (recall that this result is also implied by Keusch's proof of the 1-2-3 Conjecture~\cite{Keu24}).
A stronger result (to lists of labels) was provided by Wong and Zhu~\cite{WZ16} a bit later.

Just as for the 1-2-3 Conjecture, it is legitimate to wonder what happens if one requires to distinguish adjacent vertices through a different metric.
In this work, we focus on multisets and products.
For a graph $G$ and a (total) $k$-labelling $\ell$ of $G$, for every vertex $u$ of $G$ we denote by $\mu_\ell(u)$ and $\pi_\ell(u)$ (or $\mu(u)$ and $\pi(u)$, respectively, for short)
the multiset and product, respectively, of labels incident to $u$ (taking the own label of $u$ into account, in case $\ell$ is a total labelling).
We say that $\ell$ is \textit{multiset-proper} and \textit{product-proper} if $\mu_\ell$ and $\pi_\ell$, respectively, are proper.
And we derive the parameters $\chies(G)$ and $\chits(G)$ to multisets and products in the obvious way, resulting in the four parameters $\chiem(G),\chitm(G),\chiep(G),\chitp(G)$.
Again, for every connected graph $G$ different from $K_2$, we have $\chitm(G) \leq \chiem(G)$ and $\chitp(G) \leq \chiep(G)$.
Note also that every sum-proper or product-proper (total) labelling is also multiset-proper.
Additionally, note that for any two vertices $u$ and $v$ with distinct degrees, we necessarily have $\mu(u) \neq \mu(v)$, 
which obviously does not hold for sums and products.
For these reasons, one should perceive distinguishing through multisets as easier than distinguishing with sums or products.

The product variant of the 1-2 Conjecture was actually considered by Skowronek-Kazi\'ow~\cite{SK12}, in 2012:

\begin{12cp}
For every graph $G$, we have $\chitp(G) \leq 2$.
\end{12cp}

\noindent Again, every graph $G$ with $\chiep(G) \leq 2$ also has $\chitp(G) \leq 2$; however, it is \np-complete to decide whether a given graph admits a product-proper $2$-labelling~\cite{ADS13}.
Still, it is known that the product variant of the 1-2 Conjecture holds for several classes of graphs,
such as complete graphs and $3$-colourable graphs~\cite{SK12}.
It can be noted also that, in regular graphs, sum-proper, multiset-proper, and product proper (total) $2$-labellings are all equivalent;
thus, an earlier result of Przyby{\l}o and Wo\'zniak also implies that the product version of the 1-2 Conjecture holds for $4$-regular graphs.
In terms of general bound towards the full conjecture, Skowronek-Kazi\'ow proved that we have $\chitp(G) \leq 3$ for every graph $G$ (which also follows from the fact that the product version of the 1-2-3 Conjecture was proved~\cite{BHLS23}). It might be good to point out as well that finding a product-proper (total) $2$-labelling is equivalent to finding a sum-proper (total) $\{0,1\}$-labelling (though it was observed, see \textit{e.g.}~\cite{BMS22}, that assigning labels $1,2$ is sometimes rather different from assigning labels $0,1$).

Regarding a multiset variant of the 1-2 Conjecture, to the best of our knowledge it has not been introduced formally yet.
As the sum and product variants of the 1-2 Conjecture are still widely open, we believe it would be interesting,
to get a better understanding towards all those conjectures, to progress towards the easiest one.
So, we raise:

\begin{12cm}
For every graph $G$, we have $\chitm(G) \leq 2$.
\end{12cm}

\noindent As mentioned earlier, every result applying to the sum and product variants of the 1-2 Conjecture also apply to the multiset one.
In particular, it thus holds for complete graphs, $3$-colourable graphs, and $4$-regular graphs.
And the best result we have towards it to date, is that  $\chitm(G) \leq 3$ holds for all graphs $G$.
Actually, even a list version of this result holds.

\medskip

Our main goal in this work is to support further that the sum, multiset, and product variants of the 1-2 Conjecture might hold true,
by proving they hold for more classes of graphs. To do so, we mainly consider some of the largest classes of graphs for which they are known to hold to date,
and upgrade from them. In particular, the fact that all three conjectures hold for graphs of maximum degree at most~$3$ (which are either complete or $3$-colourable by Brooks' Theorem)
leads us to consider graphs with higher maximum degree.
Namely, we first prove:

\begin{theorem}\label{theorem:max-degree-4}
If $G$ is a graph of maximum degree~$4$, then $\chitp(G) \leq 2$.
\end{theorem}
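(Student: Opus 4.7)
My plan is to argue by contradiction, taking for $G$ a counter-example to Theorem~\ref{theorem:max-degree-4} with the fewest edges. As already pointed out in the introduction, the statement is known for $4$-regular graphs: Przybyło and Woźniak proved that $\chits \leq 2$ holds there, and since, in a regular graph, sum-, multiset-, and product-proper total $\{1,2\}$-labellings all coincide, this immediately gives $\chitp \leq 2$. It therefore suffices to show that $G$ has no vertex of degree at most $3$, as then $G$ must be $4$-regular and the above remark yields the desired contradiction.

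It is convenient to pass to the additive reformulation: setting $t(x) := \ell(x) - 1 \in \{0,1\}$ for each labelled vertex or edge $x$, a total $\{1,2\}$-labelling $\ell$ is product-proper if and only if the quantity $\tilde\pi(v) := t(v) + \sum_{u \in N(v)} t(uv)$ is a proper vertex-colouring of $G$. Now, given a vertex $v$ of degree $d \leq 3$ in $G$, the graph $G' := G - v$ has strictly fewer edges and maximum degree at most $4$, hence by minimality admits a valid labelling, which we encode additively as $t'$. The question is whether one can extend $t'$ to $G$ by choosing the $d+1$ new values $t(v), t(vu_1), \dots, t(vu_d)$, where $u_1, \dots, u_d$ are the neighbours of $v$: there are $2^{d+1}$ possible extensions, subject to the constraints (a) $\tilde\pi(v) \neq \tilde\pi(u_i)$ for every $i$, and (b) each $\tilde\pi(u_i)$, now possibly shifted upward by $t(vu_i)$, still differs from $\tilde\pi(w)$ for every $w$ adjacent to $u_i$ in $G$ (which includes other $u_j$'s when they happen to be adjacent to $u_i$).

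The canonical first attempt is to set $t(vu_i) = 0$ for all $i$: this leaves each $\tilde\pi(u_i) = \tilde\pi'(u_i)$ and so automatically preserves (b), while reducing (a) to $t(v) \in \{0,1\} \setminus \{\tilde\pi'(u_i) : 1 \leq i \leq d\}$. This is solvable unless both values $0$ and $1$ already appear among the $\tilde\pi'(u_i)$, a rather restrictive situation. When it fails, one flips suitable edge values $t(vu_i)$ to $1$, shifting the relevant $\tilde\pi(u_i)$; a finite case analysis, driven by how many of the $\tilde\pi'(u_i)$ actually lie in $\{0,1\}$, by the pairwise adjacencies among $u_1, \dots, u_d$ in $G'$, and by the $\tilde\pi'$-values in the external neighbourhoods of the $u_i$, should then produce a valid extension. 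I expect the main technical obstacle to be the case $d = 3$ in which $u_1, u_2, u_3$ are densely interconnected and their $\tilde\pi'$-values are tightly packed into $\{0, 1\}$, while simultaneously each $u_i$ has some external neighbour $w$ with $\tilde\pi'(w) = \tilde\pi'(u_i) + 1$ blocking the corresponding flip: there, every label flip risks introducing a new collision, and all of the available flexibility (including the choice of $t(v)$) must be used at once, possibly supplemented by a local re-labelling of $G'$ within the second neighbourhood of $v$ as a back-up tool.
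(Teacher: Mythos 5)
Your reduction to the $4$-regular case is sound (in a regular graph the number of incident labels equal to $2$ determines both the sum and the product, so the Przyby{\l}o--Wo\'zniak result does transfer), but the core of your argument --- that a minimum counterexample has no vertex of degree at most~$3$ --- has a genuine gap, and it is not one that a routine finite case analysis will close. The obstruction is already visible for $d=2$: take $v$ with neighbours $u_1,u_2$ where $\tilde\pi'(u_1)=0$, $\tilde\pi'(u_2)=1$, $u_1u_2\in E(G)$, and $u_2$ has an external neighbour $w$ with $\tilde\pi'(w)=2$. Setting both edge values to $0$ forces $\tilde\pi(v)=t(v)\in\{0,1\}$, both blocked; flipping $vu_1$ raises $u_1$ to $1$ and collides with $u_2$; flipping $vu_2$ (alone or together with $vu_1$) raises $u_2$ to $2$ and collides with $w$. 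Since $\Delta(G)=4$, each $u_i$ has room for such blocking neighbours, so every one of the $2^{d+1}$ extensions can be simultaneously excluded. At that point your only recourse is the ``local re-labelling of $G'$ within the second neighbourhood of $v$'' that you mention as a back-up tool, but this is precisely where the difficulty lives: changing $\tilde\pi'(w)$ requires touching labels incident to $w$, which perturbs the values of $w$'s other neighbours, and nothing bounds the resulting cascade. You have named the hard case rather than resolved it, so as it stands the proposal is a plan, not a proof.

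For comparison, the paper avoids local extension entirely and argues globally: it takes a maximum independent set $X$ of $G$ and a maximum independent set $Y$ of $G-X$, observes that maximality forces $R=G-X-Y$ to have maximum degree at most~$2$ (hence to be a union of paths, cycles, and isolated vertices), and then builds a single labelling in which vertices of $X$, $Y$, and $R$ receive products $2^0$, $2^1$, and values in $\{2^2,2^3,2^4\}$ respectively; Hall's theorem, applied using the maximality of $Y$, supplies a matching into $Y$ whose edges give the extra label-$2$ incidences needed to separate adjacent vertices within $R$. The classes $X$, $Y$, $R$ act as a coarse proper colouring by exponent ranges, so no conflict can arise across classes and the only work is along the very simple components of $R$. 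If you want to salvage your strategy, you would need either a much stronger induction hypothesis (e.g., a list- or precolouring-type statement robust under the blocking configurations above) or a different reduction than single-vertex deletion.
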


\begin{theorem}\label{theorem:max-degree-5}
If $G$ is a graph of maximum degree~$5$, then $\chitp(G) \leq 2$.
\end{theorem}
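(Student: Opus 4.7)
The plan is to work with the equivalent formulation mentioned in the introduction: a product-proper total $2$-labelling of $G$ corresponds to a total $\{0,1\}$-labelling $\ell: V(G) \cup E(G) \to \{0,1\}$ (via the encoding $1 \mapsto 0,\ 2 \mapsto 1$) such that the sums $\sigma_\ell(u) = \ell(u) + \sum_{w \in N(u)} \ell(uw)$ distinguish adjacent vertices. Since $\Delta(G) \le 5$, each $\sigma_\ell(u)$ lies in $\{0, 1, \ldots, 6\}$. I would argue by induction on $|V(G)| + |E(G)|$, considering a minimum counterexample~$G$.

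First, standard local reductions should show that $G$ must be connected with $\delta(G) \ge 2$: a leaf $v$ with neighbour $u$ can be removed, labelled by the induction hypothesis, and then reinserted via the $4$ choices for $(\ell(v), \ell(uv)) \in \{0,1\}^2$, which give $3$ possible values of $\sigma_\ell(v)$ while shifting $\sigma_\ell(u)$ by at most~$1$. Analogous but more involved reductions should handle vertices of degree $2, 3,$ or $4$: in each case a removed vertex of degree $d$ offers $2^{d+1}$ label configurations, far exceeding the number of forbidden sums at its neighbours, but care is needed so that reinsertion does not damage the labelling at those neighbours. After these reductions, we may assume the ``bulk'' of $G$ consists of degree-$5$ vertices.

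The core step is to leverage Theorem~\ref{theorem:max-degree-4} by peeling off a carefully chosen set $S$ of degree-$5$ vertices: I require $G - S$ to have maximum degree at most~$4$, so that Theorem~\ref{theorem:max-degree-4} yields a valid labelling of $G - S$, which I then extend by labelling $S$ together with its incident edges. Existence of a suitable $S$ (ideally an independent set, for decoupling) should follow from a greedy or discharging argument exploiting $\Delta(G) = 5$: since the subgraph induced by degree-$5$ vertices has maximum degree at most $5$, Brooks-type arguments yield an independent transversal hitting every ``dangerous'' vertex.

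The main obstacle will lie in the extension step itself: although each $v \in S$ of degree $5$ has $2^6 = 64$ local label configurations against only $5$ forbidden values of $\sigma_\ell(v)$ coming from its neighbours, the labels of edges between $S$ and $V(G)\setminus S$ also perturb the sums of $v$'s neighbours, which were fixed by the labelling of $G - S$. Overcoming this will require choosing $S$ so that each neighbour in $G - S$ retains enough ``slack'' in its sum to absorb such perturbations, together with a finite case analysis on the local structure around each $v \in S$ (in the spirit of Theorem~\ref{theorem:max-degree-4}). I expect this step---the careful coordination between the extension at $S$ and the stability of the labelling on $G - S$---to be where the bulk of the technical work resides.
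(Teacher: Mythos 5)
Your strategy---induction with low-degree reductions, then peeling an independent set $S$ of degree-$5$ vertices so that Theorem~\ref{theorem:max-degree-4} applies to $G-S$, followed by an extension---is genuinely different from the paper's, but both of its load-bearing steps are unproved, and each runs into the same fundamental obstruction, which is precisely what makes the 1-2 Conjecture hard. Consider already the degree-$2$ reduction: delete $v$ with neighbours $u_1,u_2$, label $G-v$, and reinsert. The only label for $vu_i$ that is guaranteed not to break $u_i$'s distinction from \emph{its} other neighbours is $1$ (i.e., $0$ in your additive encoding); with both incident edges labelled $1$ you get $\pi(v)=2^{\ell(v)}\in\{2^0,2^1\}$, and if $\{\pi(u_1),\pi(u_2)\}=\{2^0,2^1\}$ you are stuck. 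Escaping requires labelling some $vu_i$ with $2$, which doubles $\pi(u_i)$, and nothing guarantees the doubled product avoids the products of $u_i$'s other neighbours. So the count ``$2^{d+1}$ configurations versus few forbidden sums at $v$'' is not the relevant count: the binding constraints are the rigidity of the neighbours' already-fixed sums, and those can fail for every configuration. The same problem recurs, amplified, in your extension step. The labelling of $G-S$ produced by Theorem~\ref{theorem:max-degree-4} is a black box whose products range over $2^0,\dots,2^4$; a degree-$5$ vertex $v\in S$ with all incident edges labelled $1$ has $\pi(v)\in\{2^0,2^1\}$, and both values can be taken by neighbours of $v$. Raising $\pi(v)$ forces some incident edge to carry label $2$, which doubles the product of the corresponding neighbour and can collide with that neighbour's other neighbours (e.g., a neighbour with product $2^0$ pushed to $2^1$ collides with the $2^1$-layer of $G-S$). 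Your proposal explicitly defers this to ``a finite case analysis,'' but no amount of local case analysis at $v$ can succeed without also being allowed to modify the labelling inside $G-S$, and no mechanism for that is provided.

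For contrast, the paper avoids extension entirely: it never fixes a labelling and then tries to enlarge it. It extracts a maximum independent set $X$ of $G$ and a maximum independent set $Y$ of $G-X$, so that $R=G-X-Y$ has $\Delta(R)\le 3$; it then takes a maximum cut $(R_1,R_2)$ of $R$ with extremal side conditions (Lemma~\ref{lemma:cut-max3}) to control the structure of $G[R_1]$ and $G[R_2]$, and uses Hall's Theorem on a matching from an independent set of $R_1$ into $Y$ to reserve ``adjustable'' edges. The labelling is then built globally so that products are stratified by layer ($2^0$ on $X$, $2^1$ on $Y$, $2^2$--$2^5$ on $R$, with $R_1$ and $R_2$ separated), so cross-layer conflicts are impossible by construction and only a small amount of within-layer bookkeeping remains. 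If you want to salvage your peeling idea, you would need the labelling of $G-S$ to come with guarantees about which products appear adjacent to $S$ --- at which point you are effectively rebuilding the paper's stratified construction.
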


\begin{theorem}\label{theorem:max-degree-6}
If $G$ is a graph of maximum degree~$6$, then $\chitp(G) \leq 2$.
\end{theorem}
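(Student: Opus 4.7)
The plan is to consider a minimal counterexample $G$ (minimizing $|V(G)|$, then $|E(G)|$) and derive a contradiction by exhibiting reducible configurations. Since Theorems~\ref{theorem:max-degree-4} and~\ref{theorem:max-degree-5} treat $\Delta(G) \le 5$, I may assume $\Delta(G) = 6$. It is convenient to reformulate the problem through the equivalence noted in the introduction: a product-proper total $\{1,2\}$-labelling corresponds exactly to a total $\{0,1\}$-labelling $\ell$ whose induced sums $s(u) = \ell(u) + \sum_{v \in N(u)} \ell(uv) \in \{0, 1, \dots, \deg(u)+1\}$ differ along every edge. In particular, each $s(u)$ takes at most $8$ distinct values.

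First, I would rule out small-degree vertices. For a vertex $v$ of degree $d \le 2$, removing $v$ yields a smaller graph admitting a valid labelling $\ell'$ by minimality, which I would extend by choosing among the $2^{d+1}$ possible assignments to $v$ and its incident edges. The extension must avoid the values $s(u)$ at all $d$ neighbours, and, more subtly, must not cause a conflict at any neighbour $u$ (whose sum is shifted by $\ell(uv)$) with $u$'s other neighbours. A careful case analysis, exploiting that each neighbour has degree at most $6$, handles these small cases as well as some configurations around degree-$3$ or degree-$4$ vertices where two low-degree vertices are adjacent or share a neighbour.

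With small-degree vertices and their clusters ruled out, I would employ a discharging argument with initial charge $\deg(v) - 6$ at every vertex (so total charge is non-positive by the handshake lemma) and rules that redistribute charge along edges according to the reducible configurations already identified. This forces $G$ to be essentially $6$-regular, which is where the main obstacle lies: in a $6$-regular region, every vertex has very little slack in its sum value, and removing a single vertex leaves its neighbourhood tightly constrained, so a naive extension usually fails. To overcome this, I would treat $6$-regular (or near-$6$-regular) graphs separately, for instance by decomposing them into three edge-disjoint $2$-factors via Petersen's theorem and labelling each $2$-factor consistently (exploiting the simple cycle structure of $2$-factors to prescribe the count of $1$-labels at each vertex), or by combining existing results on the 1-2 Conjecture for regular graphs with an inductive analysis of the boundary between regular and non-regular parts. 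Combining the regular case with the inductive reductions then contradicts minimality and proves the theorem.
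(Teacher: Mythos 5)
Your proposal is a plan rather than a proof, and the two places where you defer to ``a careful case analysis'' are exactly where the substance of the theorem lies; moreover, the route you choose is entirely different from the paper's, which uses no minimal counterexample and no discharging. The paper instead extracts two nested maximum independent sets $X$ and $Y$ (intended to receive products $2^0$ and $2^1$), observes that $R=G-X-Y$ has maximum degree at most~$4$, takes a maximum cut $(R_1,R_2)$ of $R$ subject to several tie-breaking minimality conditions (Lemma~\ref{lemma:cut-max4}), uses Hall's Theorem to reserve a matching from an independent set of $R_1$ into $Y$, and then runs an explicit eighteen-step labelling that assigns intended products $2^5,2^6$ to $R_1$ and $2^2,2^3$ to $R_2$ while tracking ``bad edges'' and five types of faulty vertices.

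The first genuine gap in your plan is the reducibility of low-degree configurations for the \emph{product} (equivalently, sum over $\{0,1\}$) version. Your extension argument for a degree-$2$ vertex $v$ already illustrates the problem: keeping both labels $\ell(vu_1)=\ell(vu_2)=0$ preserves properness at $u_1,u_2$ but leaves only two candidate values for $s(v)$, which may both be blocked; switching an edge label to $1$ shifts $s(u_i)$ and can create a new conflict at $u_i$ with its up to five other neighbours, and with only one bit of freedom per edge there is no guarantee of escape. The paper's own discharging proof (Theorem~\ref{theorem:mad3}) succeeds only for multisets, and its conclusion explicitly records that attempting the same scheme for products ``leads to different reducible configurations, which make the discharging process unclear'' --- so the reducibility you assume is not routine and is, as far as the authors could determine, an open obstacle. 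The second gap is the regular core: charge $d(v)-6$ only forces average degree~$6$ on a counterexample, and ``essentially $6$-regular'' is not a class you can reduce to cleanly; worse, the genuinely $6$-regular case is itself one of the \emph{new} results of this paper (only $4$-regular graphs were previously known), so there are no ``existing results on regular graphs'' to invoke, and the Petersen $2$-factor decomposition does not by itself prescribe label counts that differ across every edge. As written, the proposal does not constitute a proof.
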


\noindent The main reason we do not combine Theorems~\ref{theorem:max-degree-4} to~\ref{theorem:max-degree-6} to a single result,
is that the proofs we provide get more and more involved as the maximum degree increases, 
and a general proof would introduce tools and arguments that would not make sense for the lowest values of the maximum degree we consider.
As mentioned earlier, Theorems~\ref{theorem:max-degree-4} to~\ref{theorem:max-degree-6} provide similar results for the multiset version of the 1-2 Conjecture,
and for the sum version of the 1-2 Conjecture for $4$-regular graphs (thus reproving, through a different proof, a result from~\cite{PW10}), $5$-regular graphs,
and $6$-regular graphs, respectively.

Another result we provide deals with graphs with bounded maximum average degree.
Recall that the \textit{average degree} ${\rm ad}(G)$ of a graph $G$ is defined as $\frac{2|E(G)|}{|V(G)|}$, 
while the \textit{maximum average degree} $\mad(G)$ of $G$ is the maximum value of ${\rm ad}(H)$ over all (not necessarily induced) subgraphs $H$ of $G$. 
So, we also prove:

\begin{theorem}\label{theorem:mad3}
If $G$ is a graph of maximum average degree at most~$3$, then $\chitm(G) \leq 2$.
\end{theorem}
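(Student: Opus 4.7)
The plan is to argue via the minimum counterexample method combined with a discharging argument, following the standard framework for results under a maximum-average-degree hypothesis. Suppose, for contradiction, that $G$ is a connected graph with $\mad(G)\leq 3$ and $\chitm(G)>2$, chosen to minimise $|V(G)|+|E(G)|$ among all such graphs. I would proceed in two phases: first, prove that $G$ cannot contain certain local structures (reducible configurations); then, show via discharging that under $\mad(G)\leq 3$ at least one such configuration must nonetheless appear in $G$, producing a contradiction.

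For the reducibility phase, the target configurations are the usual sparse-graph ones, namely a vertex of degree $1$, two adjacent vertices of degree $2$, a vertex of degree $3$ with two or more neighbours of degree $2$, together with a small list of further variants involving $3$-vertices sharing a $2$-neighbour. Each is handled by the same recipe: delete (or contract) a short piece of $G$ to obtain a smaller graph $G'$ with $\mad(G')\leq 3$; use the minimality of $G$ to get a multiset-proper total $2$-labelling $\ell'$ of $G'$; and extend $\ell'$ to $G$ by picking labels for the reintroduced vertex and its incident edges. The multiset setting is especially convenient here, since, as pointed out in the introduction, adjacent vertices of distinct degrees are automatically distinguished. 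So when reinserting a vertex $u$ of degree $d\in\{1,2,3\}$, the only constraints to enforce are $\mu(u)\neq \mu(x)$ for same-degree neighbours $x$ of $u$, together with $\mu(v)\neq \mu(y)$ for each neighbour $v$ of $u$ against $v$'s own same-degree neighbours in $G$. Each such constraint forbids at most one value of a reinserted label, and we have $2^{d+1}$ extension choices, which is enough provided the reducibility list controls the relevant same-degree neighbourhoods tightly.

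For the discharging phase, I would assign each vertex $v$ the initial charge $\omega(v)=d(v)-3$. Since $\mad(G)\leq 3$, we have $\sum_v \omega(v)=2|E(G)|-3|V(G)|\leq 0$. I would then design rules sending charge from vertices of degree at least $4$ to their nearby $2$- and $3$-vertex neighbours in such a way that, with the reducible configurations ruling out the worst local patterns, every vertex ends up with nonnegative final charge and some vertex strictly positive, contradicting the global bound. The main obstacle I anticipate is the reducibility step at a $3$-vertex $v$ adjacent to a $2$-vertex: both possible values of the reinserted edge at $v$ can be forbidden by same-degree $3$-neighbours of $v$ elsewhere, which makes the naive extension fail. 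Overcoming this will likely require either locally relabelling a bounded number of nearby edges in $G'$ to free one of the two values, or strengthening the reducibility list so that this tight situation is itself forbidden in the minimum counterexample. Calibrating the final list of reducible configurations with a discharging scheme that still closes under $\mad\leq 3$ is the remaining delicate step.
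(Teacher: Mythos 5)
Your overall framework (minimal counterexample, charge $\omega(v)=d(v)-3$, reducible configurations plus discharging) is the same as the paper's, but the proposal as written has concrete gaps, and the places where you locate the difficulty are not where the actual difficulty lies. First, ``a vertex of degree~$1$'' is not a reducible configuration, and the paper does not claim it is: if $u$ is pendant on $v$, only the label of $uv$ affects $\mu(v)$, so you get just two candidate multisets for $v$, while $v$ may have up to $d(v)-1$ neighbours of its own degree, each forbidding one of them. The paper instead keeps $1$-vertices, shows their unique neighbour must be a $4^+$-vertex (via its configurations C1 and C3), and pays for them in the discharging. Conversely, the obstacle you single out --- a $3$-vertex $v$ with a $2$-neighbour, where ``both possible values of the reinserted edge at $v$ can be forbidden'' --- is not an obstacle at all in the total-labelling setting: besides the edge label you may also flip the label of $v$ itself, which yields three distinct candidate multisets for $v$ against only two forbidden ones coming from its other two neighbours. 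This is exactly the paper's configuration C3 ($k$-vertex with $\lfloor k/2\rfloor$ $2^-$-neighbours, reducible because $\lfloor k/2\rfloor+2>\lceil k/2\rceil$), and it needs no local relabelling of $G'$.

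The genuinely delicate step, which your sketch defers entirely to ``calibrating the final list,'' is the tight case of an even-degree $k$-vertex with exactly $k/2-1$ $2^-$-neighbours: it starts with charge $k-3$ and sends $2(k/2-1)=k-2$, ending at $-1$, so the natural local rule does not close. The paper resolves this with an additional reducible configuration (C4: such a ``weak'' $k$-vertex cannot have a neighbour of degree $k'\notin\{1,2,k\}$) and a \emph{global} transfer rule: using $\mad(G)\le 3$ it proves a counting inequality $|S_k|\ge|W_k|$ between strong and weak $k$-vertices, pairs each weak vertex with a distinct strong one, and has each supporter send charge~$1$ to the weak vertex it supports. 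Nothing in your proposal anticipates a non-local rule of this kind, and without it (or some substitute) the discharging does not terminate with nonnegative charges. Finally, the endgame for $\mad(G)=3$ exactly --- exhibiting a vertex of strictly positive final charge, or falling back on the maximum-degree-$5$ theorem, or deleting the $1$-vertices to force $\mad>3$ --- is asserted in your last sentence of the discharging phase but never argued. As it stands the proposal is a reasonable plan sharing the paper's skeleton, but the two pillars that make the proof work (the correct reducibility list exploiting the free vertex label, and the weak/strong pairing rule) are missing.
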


\noindent Let us mention that graphs $G$ with $\mad(G)<3$ are $2$-degenerate, thus $3$-colourable,
so the sum, multiset, and product variants of the 1-2 Conjecture all hold for them.
Thus, the novelty of Theorem~\ref{theorem:mad3} resides mainly in the fact that we also deal with graphs $G$ with $\mad(G)=3$.
Another point of interest lies in the proof method we employ, which is based on the so-called discharging method.
Employing this method in this context is not new, see \textit{e.g.}~\cite{CJW14}.
However, to our knowledge, it is the first time that graphs with such ``large'' value of $\mad$ are considered, in this context.

Each of Theorems~\ref{theorem:max-degree-4} to~\ref{theorem:mad3} is proved within a dedicated section (Sections~\ref{section:degree4} to~\ref{section:mad3}).
We finish off in Section~\ref{section:ccl} with remarks and directions for further work on the topic.

\section{Proof of Theorem~\ref{theorem:max-degree-4}}\label{section:degree4}

In this proof, we may assume that $G$ is a connected graph such that $\chi(G)\geq 4$, as otherwise the result follows from~\cite{SK12}. Likewise, since the product version of the 1-2 Conjecture holds for complete graphs and $3$-colourable graphs~\cite{SK12}, we may also assume $\Delta(G)=4$.

In rough words, the proof goes as follows. We extract two maximum independent sets $X$ and $Y$ from $G$. The fact that $X$ and $Y$ are maximum guarantees that $\Delta(G-X-Y) \leq 2$ is thus a disjoint union of isolated vertices, paths, and cycles. Our labelling strategy is then to aim for vertices in $X$ to have product $2^0$, those in $Y$ to have product $2^1$, and those in $V(G) \setminus X \setminus Y$ to have products at least $2^2$, while having adjacent vertices in $G-X-Y$ to have distinct products. This last point will be achieved using the fact that the structures in $G-X-Y$ are usually easy to label, especially when taking into account that edges incident to a vertex in $V(G) \setminus X \setminus Y$ and to one in $X \cup Y$ can also have their labels modified. The only point we have to be careful with here, is that vertices in $X$ and $Y$ are intended to eventually have product $2^0$ and $2^1$, respectively, so we are only allowed, for vertices in $Y$, to assign label~$2$ to a single incident edge going to $G-X-Y$. So, edges assigned label~$2$ joining vertices of $Y$ and $G-X-Y$ must be disjoint on $Y$'s side. To find such a set of ``safe'' edges, we will invoke Hall's Theorem, and that $Y$ is a maximum independent set. 

\begin{theorem}[Hall's Marriage Theorem~\cite{Hal35}]\label{lemma:hall}
If $G$ is a bipartite graph with bipartition $(U,V)$ such that, for every $U' \subseteq U$, we have $|U'| \leq |N_V(U')|$,
then $G$ admits a matching saturating $U$.
\end{theorem}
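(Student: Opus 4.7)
The plan is to prove Hall's Marriage Theorem by induction on $|U|$. The base case $|U|=1$ is immediate: applying the hypothesis to $U' = U$ gives $|N_V(U)| \geq 1$, so the unique vertex of $U$ has a neighbor in $V$ to which it can be matched.

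For the inductive step with $|U| = n \geq 2$, I would split into two cases depending on whether Hall's condition holds strictly on every proper nonempty subset of $U$, or is tight on at least one such subset. In the \emph{strict case}, I assume $|N_V(U')| \geq |U'| + 1$ for every $\emptyset \neq U' \subsetneq U$. I then pick any $u \in U$ and any $v \in N_V(\{u\})$, match them, and delete both vertices. For any $U'' \subseteq U \setminus \{u\}$, the neighborhood of $U''$ in $V \setminus \{v\}$ has size at least $|N_V(U'')| - 1 \geq |U''|$ (the nontrivial inequality using the strict slack), so Hall's condition is preserved and the inductive hypothesis yields a matching saturating $U \setminus \{u\}$; adding the edge $uv$ gives the desired matching saturating $U$.

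In the \emph{tight case}, suppose some proper nonempty $U' \subsetneq U$ satisfies $|N_V(U')| = |U'|$. I would apply the inductive hypothesis twice. First, to the bipartite subgraph induced by $U' \cup N_V(U')$, which still satisfies Hall's condition (any $W \subseteq U'$ has the same neighborhood in this subgraph as in $G$), producing a matching $M_1$ saturating $U'$. Second, to the subgraph on $(U \setminus U') \cup (V \setminus N_V(U'))$; here I must verify that for any $U'' \subseteq U \setminus U'$, the neighborhood of $U''$ avoiding $N_V(U')$ has size at least $|U''|$. This follows by applying the original Hall condition to $U' \cup U''$: its neighborhood has size at least $|U'| + |U''|$, but intersects $N_V(U')$ in at most $|N_V(U')| = |U'|$ vertices, leaving at least $|U''|$ vertices outside $N_V(U')$. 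The inductive hypothesis then produces a matching $M_2$ saturating $U \setminus U'$, and since $M_1$ and $M_2$ use disjoint vertex sets on the $V$ side, $M_1 \cup M_2$ saturates $U$.

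The main obstacle will be the tight case, specifically verifying Hall's condition for the ``outside'' subgraph on $(U \setminus U') \cup (V \setminus N_V(U'))$; this is where the tightness hypothesis on $U'$ interacts crucially with the condition applied to $U' \cup U''$. The strict case and the base case are essentially bookkeeping once the dichotomy is in place. An alternative route would be to derive the statement from the max-flow min-cut theorem or from K\"onig's theorem applied to an associated bipartite graph, but the direct induction above seems to be the shortest self-contained argument.
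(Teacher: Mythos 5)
Your proof is correct: it is the classical Halmos--Vaughan induction on $|U|$, splitting on whether Hall's condition is tight on some proper nonempty subset, and both cases are handled properly (in particular, the key computation in the tight case, deducing $|N_V(U'') \setminus N_V(U')| \geq |U''|$ from $|N_V(U' \cup U'')| \geq |U'| + |U''|$ and $|N_V(U')| = |U'|$, is exactly right). Note, however, that the paper does not prove this statement at all: it is Hall's Marriage Theorem, imported as a black box with a citation to Hall's 1935 article, so there is no in-paper argument to compare against; your self-contained proof is simply the standard one and is sound.
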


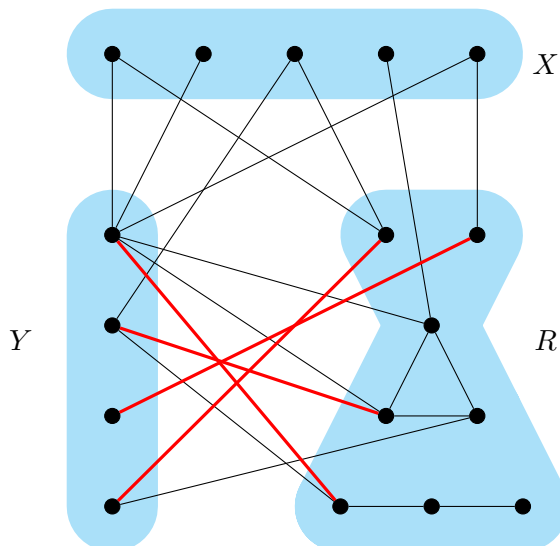
\begin{figure}[t!]
	\centering
	\begin{tikzpicture}[scale=0.6]
		
		\tikzset{
			vertex/.style={circle, fill=black, draw, inner sep=1pt, minimum size=0.2cm},
			matching/.style={very thick, red},
			groupbg/.style={cyan!30, line width=12mm, line cap=round},
		}
		
		\foreach \i/\x/\y in {
			1/4/10, 2/6/10, 3/8/10, 4/10/10, 16/12/10,     
			5/4/0, 6/4/2, 7/4/4, 17/4/6,                    
			8/9/0, 9/11/0, 10/13/0, 11/10/2, 12/12/2,
			13/11/4, 14/10/6, 15/12/6                      
		} \node[vertex] (\i) at (\x,\y) {};
		
		\begin{pgfonlayer}{background}
			\draw[groupbg] (1.center)--(2.center)--(3.center)--(4.center)--(16.center); 
			\draw[groupbg] (5.center)--(6.center)--(7.center)--(17.center);             
			\draw[groupbg, fill=cyan!30, line join=round] 
			(8.center)--(9.center)--(10.center)--(12.center)--(8.center)--
			(11.center)--(13.center)--(14.center)--(15.center)--(13.center)--(12.center); 
		\end{pgfonlayer}
		
		\draw (8)--(9)--(10); 
		\draw (11)--(12)--(13)--(11); 
		\draw (8)--(7); \draw (17)--(11);\draw (17)--(13); \draw (5)--(12); 
		\draw (1)--(17); \draw (2)--(17);\draw (3)--(7); \draw (4)--(13); 
		\draw (16)--(15); \draw (1)--(14);\draw (3)--(14); \draw (16)--(17);
		
		\draw[matching] (17)--(8);
		\draw[matching] (14)--(5);
		\draw[matching] (15)--(6);
		\draw[matching] (7)--(11);
		
		\node[color=white, label={$X$}] at (13.5,9.1) {};
		\node[color=white, label={$Y$}] at (2,3) {};
		\node[color=white, label={$R$}] at (13.5,3) {};
		
	\end{tikzpicture}
	\caption{Representation of $G$ as $X\cup Y\cup R$. The matching $M$ is represented with red edges.}
	\label{fig_def_Si}
\end{figure}

We now begin the proof.
Let $X$ be a maximum independent set of $G$, let $Y$ be a maximum independent set of $G-X$, and set $R=G-(X\cup Y)$. Our goal is to show that there is a total $2$-labelling $\ell$ of $G$ such that $\pi(v)= 2^0$ for all $v \in X$ and $\pi(v)= 2^1$ for all $v\in Y$, and every vertex $v$ of $R$ has $\pi(v) \in \{2^2,2^3,2^4\}$.

Notice that every vertex of $R$ has at least one neighbour in $Y$, by maximality of $Y$, and the same goes for $X$. Hence, since $\Delta(G) \leq 4$, we have $\Delta(R) \leq 2$.
Thus, $R$ is a disjoint union of cycles, paths, and isolated vertices. See Figure~\ref{fig_def_Si} for an illustration.

We denote by $\mathcal{C}$ the collection of cycles of $R$, by $\mathcal{P}$ the collection of paths, and by $\mathcal{I}$ the set of its isolated vertices. Now we define an independent set $I$ of $R$ in the following way: first, $\mathcal{I}\subseteq I$; now for every cycle $C$ of $\mathcal{C}$, we pick any vertex $v_C$ of $C$ and add it to $I$; furthermore, for every path $P$ of $\mathcal{P}$, we pick any end-vertex $v_P$ of $P$ and add it to $I$, and we call the other end-vertex $\hat{v}_P$. Clearly, $I$ is independent.

Now, since $Y$ is a maximum independent set of $G-X$, we have $|I'| \leq |N_Y(I')|$ for all $I' \subseteq I$ (as otherwise $Y \setminus N_Y(I') \cup I'$ would contradict the choice of $Y$);
so, by Hall's Theorem, there is a matching $M$ of $G$ across $(I,Y)$ saturating $I$.

Let us now start from $\ell$, the total $2$-labelling of $G$ assigning label~$1$ to all vertices and edges. We change some of these labels to~$2$ so that $\ell$ becomes product-proper. This is achieved through the following steps:

\begin{enumerate}
	\item For all edges in $R$ and $M$, we change to $2$ the labels assigned to them. 

    \item For all vertices $v \in Y$ not incident to an edge of $M$, we change to $2$ the label assigned to $v$. As a result, $\pi(v)=2^1$ for every $v\in Y$.
    
	\item For every cycle $C$ of $\mathcal{C}$, we first change to $2$ the label assigned to $v_C$ so that $\pi(v_C)=2^4$. Next, if necessary, we change, along $C$ from $v_C$, the labels assigned to the vertices of $C-v_C$ so that the products alternate between $2^2$ and $2^3$. This is possible because every vertex $v \in C-v_C$ is incident to two edges assigned label~$2$.
	
	\item For every path $P$ of $\mathcal{P}$, we first change to $2$ the label assigned to $\hat{v}_P$ so that $\pi(\hat{v}_P)=2^2$. Next, we change, if necessary, along $P$ from $\hat{v}_P$ the labels assigned to the vertices of $P-\hat{v}_P$ so that the products alternate between $2^2$ and $2^3$. In particular, this can be achieved so that the neighbour of $\hat{v}_P$ in $P$ has product $2^3$. This is possible because every vertex $v \in P-\hat{v}_P$ is incident to two edges assigned label~$2$.
\end{enumerate}

We claim that $\ell$ is eventually product-proper. First off, $\pi(v)=2^0$ for all $v \in X$. Now, note that, for all $v \in Y$, regardless whether $v$ is incident to an edge of $M$, we have $\pi(v)=2^1$. Meanwhile, we have $\pi(v) \in \{2^2,2^3,2^4\}$ for all $v \in V(R)$. Thus conflicts can only occur in $R$. In any $P \in \mathcal{P}$, recall that consecutive products alternate between $2^2$ and $2^3$. For every $C \in \mathcal{C}$, recall this also holds for all vertices but one vertex $v_C$, which is not a problem since $\pi(v_C)=2^4$. Hence, $\ell$ is product-proper, and the proof is complete.

\section{Proof of Theorem~\ref{theorem:max-degree-5}}\label{section:degree5}

Again, we may assume that $G$ is connected with $\chi(G) \geq 4$, and that $\Delta(G) = 5$ (as otherwise Theorem~\ref{theorem:max-degree-4} would apply). The proof below starts somewhat similarly to that of Theorem~\ref{theorem:max-degree-4}, in the sense that we first extract two maximum independent sets $X$ and $Y$, with vertices intended to eventually have product $2^0$ and $2^1$, respectively, by a total $2$-labelling, and that, through labelling incident edges, will help in reaching desired products for vertices in $G-X-Y$. The main difference here, however, is that $\Delta(G-X-Y) \leq 3$, which is trickier to deal with than when $G-X-Y$ has maximum degree at most~$2$. To deal with this issue, we will partition the vertices of $G-X-Y$ following a maximum cut with certain properties, to further break down its structural complexity; namely:

\begin{lemma}\label{lemma:cut-max3}
If $G$ is a graph with $\Delta(G) \leq 3$, and $(V_1,V_2)$ is a maximum cut of $G$ minimising $|V_1|$, then:
\begin{enumerate}
    \item $\Delta(G[V_1])\le 1$ and $\Delta(G[V_2])\le 1$; 
    \item for every edge $uv \in E(G[V_1]) \cup E(G[V_2])$, we have $d_G(u)\ge 2$ and $d_G(v)\ge 2$;
    \item for every edge $uv \in E(G[V_1])$, we have $d_G(u) = d_G(v) = 3$.
\end{enumerate}
\end{lemma}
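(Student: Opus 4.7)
The plan is to prove each of the three statements by a standard swap argument on the maximum cut $(V_1, V_2)$, exploiting the fact that for any vertex $v$ in a maximum cut, at least half of its neighbours must lie on the opposite side: otherwise moving $v$ across the cut would strictly enlarge it.

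For (1), I would suppose, for contradiction, that some vertex $v \in V_1$ has at least two neighbours inside $V_1$. Since $d_G(v)\le 3$, it then has at most one neighbour in $V_2$, so reassigning $v$ to $V_2$ gains at least two crossing edges while losing at most one, contradicting the maximality of the cut. The case $v \in V_2$ is symmetric, yielding $\Delta(G[V_1])\le 1$ and $\Delta(G[V_2])\le 1$.

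For (2), consider an edge $uv$ lying inside $V_1$ (the case $V_2$ is symmetric) and suppose $d_G(u) = 1$. Then $u$'s unique neighbour is $v$, which lies on the same side, so moving $u$ to $V_2$ turns $uv$ into a crossing edge and creates no new non-crossing edge, strictly increasing the cut and again contradicting maximality. Degree $0$ is impossible because $u$ is incident to $uv$.

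For (3), which I expect to be the main obstacle because it is the only place where the minimality of $|V_1|$ is essential, take any $uv \in E(G[V_1])$. By (1), $v$ is the unique neighbour of $u$ inside $V_1$, and by (2), $d_G(u)\ge 2$. Suppose for contradiction that $d_G(u)=2$: then $u$ has exactly one neighbour $w\in V_2$. Moving $u$ from $V_1$ to $V_2$ removes the crossing edge $uw$ while turning $uv$ into a crossing edge, so the total cut size is unchanged; however the resulting partition has a strictly smaller first part, contradicting the minimality of $|V_1|$ among all maximum cuts. Hence $d_G(u)=3$, and applying the same argument to $v$ in place of $u$ gives $d_G(v)=3$. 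The subtle point here is that maximality alone would leave the balanced possibility $d_G(u)=2$ open; it is precisely the tie-breaking via $|V_1|$ that forces both endpoints of an edge in $G[V_1]$ to have degree exactly~$3$.
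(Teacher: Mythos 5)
Your proposal is correct and follows essentially the same argument as the paper: the same vertex-swap reasoning for items (1) and (2), and the same observation for item (3) that a degree-$2$ endpoint of an edge inside $V_1$ could be moved to $V_2$ without changing the cut size, contradicting the minimality of $|V_1|$. No issues.
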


\begin{proof}
If, say, there was a vertex $v \in V_1$ with $d_{V_1}(v) \geq 2$, then we would have $d_{V_2}(v) \leq 1$ (recall $\Delta(G) \leq 3$), and thus upon moving $v$ from $V_1$ to $V_2$ we would deduce a cut of $G$ contradicting that $(V_1,V_2)$ is a maximum cut. This proves the first item.

Assume now that, say, $G[V_1]$ contains an edge $uv$ with $d_G(u)=1$. Then $d_{V_1}(u)=1$ and $d_{V_2}(u)=0$, and, upon moving $u$ from $V_1$ to $V_2$, we would obtain a cut of $G$ contradicting that $(V_1,V_2)$ is a maximum cut. This proves the second item.

Last, consider any edge $uv \in E(G[V_1])$, and assume, say, that $d_G(u)<3$ (recall that $\Delta(G) \leq 3$). By the second item, we thus have $d_G(u)=2$, and, because $(V_1,V_2)$ is a maximum cut of $G$, obviously we have $d_{V_1}(u)=d_{V_2}(u)=1$ (as otherwise we would get a better cut upon moving $u$ to $V_2$). Here, upon moving $u$ from $V_1$ and $V_2$ we obtain another maximum cut $(V_1',V_2')$ (where $V_1'=V_1 \setminus \{u\}$) of $G$, but with $|V_1'|<|V_1|$. This contradicts the second hypothesis on $(V_1,V_2)$; hence, the third item holds.
\end{proof}

We now begin the proof.
Let $X$ be a maximum independent set of $G$, and let $Y$ be a maximum independent set of $G-X$.
Set $R=G-X-Y$. Then, every vertex of $R$ has at least one neighbour in $Y$, by maximality of $Y$.
For this reason, since $\Delta(G) \leq 5$, we have $\Delta(R) \leq 3$.
Next, let $(R_1, R_2)$ be a maximum cut of $R$ minimising $|R_1|$. 
We further set 
$$I_1=\{v\in R_1~|~d_{R_1}(v) = 0 {\rm ~and~} d_{R}(v) =1\} {\rm ~and~} I_2=\{v\in R_1~|~d_{R_1}(v) = 0 {\rm ~and~} d_{R}(v) \ge 2\}.$$ Thus, $I_1 \cup I_2$ contains the isolated vertices of $R_1$\footnote{Abusing the notation, for simplicity we sometimes use the notations $R_i$ and $G[R_i]$ interchangeably, when the context is clear.}. 
For every edge of $R_1$, we also choose one incident vertex and add it to another set $I_3$. Since edges in $R_1$ form a matching (recall the first item of Lemma~\ref{lemma:cut-max3}), $I_3$ is an independent set.
Set $I=I_1\cup I_2\cup I_3$; clearly, $I$ is independent.
Now, as in the proof of Theorem~\ref{theorem:max-degree-4}, since $Y$ is a maximum independent set of $G-X$, it must be that $|I'| \leq |N_Y(I')|$ for all $I' \subseteq I$;
thus, by Hall's Theorem, there is necessarily a matching $M$ of $G$ across $(I,Y)$ saturating $I$.

\begin{figure}[!t]
 	\centering
 	
    \scalebox{1}{
	\begin{tikzpicture}[inner sep=0.7mm]

	\node at (3.25,5.5){\Large\textcolor{gray}{$R_1$}};
	\node at (6.75,5.5){\Large\textcolor{gray}{$R_2$}};
	
	\draw[Green, line width=1.5pt,opacity=0.4, fill] (2,4) circle (0.3cm);	
	\node at (1.25,4){\small\textcolor{Green}{$\in I_3$}};
	\node[draw,circle,line width=1pt,fill=black](u) at (2,4){};
	\node[draw,circle,line width=1pt,fill=black](v) at (2,2.5){};
	\draw[line width=1.5pt,draw,black] (u) -- (v);
	\draw[line width=1.5pt,draw,black] (u) -- (5,4);
	\draw[line width=1.5pt,draw,black] (u) -- (5,4.5);
	\draw[line width=1.5pt,draw,black] (v) -- (5,2.5);
	\draw[line width=1.5pt,draw,black] (v) -- (5,3);
	
	\draw[Orange, line width=1.5pt,opacity=0.4, fill] (2,1) circle (0.3cm);	
	\node at (1.25,1){\small\textcolor{Orange}{$\in I_1$}};
	\node[draw,circle,line width=1pt,fill=black](w) at (2,1){};
	\draw[line width=1.5pt,draw,black] (w) -- (5,1);
	
	\draw[Cyan, line width=1.5pt,opacity=0.4, fill] (2,-0.5) circle (0.3cm);	
	\node at (1.25,-0.5){\small\textcolor{Cyan}{$\in I_2$}};
	\node[draw,circle,line width=1pt,fill=black](x) at (2,-0.5){};
	\draw[line width=1.5pt,draw,black] (x) -- (5,-0.5);
	\draw[line width=1.5pt,draw,black] (x) -- (5,0);
	
	\draw[Cyan, line width=1.5pt,opacity=0.4, fill] (2,-2) circle (0.3cm);	
	\node at (1.25,-2){\small\textcolor{Cyan}{$\in I_2$}};
	\node[draw,circle,line width=1pt,fill=black](y) at (2,-2){};
	\draw[line width=1.5pt,draw,black] (y) -- (5,-1.5);
	\draw[line width=1.5pt,draw,black] (y) -- (5,-2);
	\draw[line width=1.5pt,draw,black] (y) -- (5,-2.5);
	
	\draw[Purple, line width=1.5pt,opacity=0.4, fill] (8,3.5) circle (0.3cm);	
	\node at (8.75,3.5){\small\textcolor{Purple}{$\in J_2$}};
	\node at (8.3,2.75){\tiny\textcolor{black}{$\not \in F$}};
	\node[draw,circle,line width=1pt,fill=black](a) at (8,3.5){};
	\node[draw,circle,line width=1pt,fill=black](b) at (8,2){};
	\draw[line width=1.5pt,draw,black] (a) -- (b);
	\draw[line width=1.5pt,draw,black] (a) -- (5,4);
	\draw[line width=1.5pt,draw,black] (a) -- (5,3.5);
	\draw[line width=1.5pt,draw,black] (b) -- (5,2);
	
	\draw[Purple, line width=1.5pt,opacity=0.4, fill] (8,0.5) circle (0.3cm);	
	\node at (8.75,0.5){\small\textcolor{Purple}{$\in J_2$}};
	\node at (8.3,-0.25){\tiny\textcolor{black}{$\in F$}};
	\node[draw,circle,line width=1pt,fill=black](c) at (8,0.5){};
	\node[draw,circle,line width=1pt,fill=black](d) at (8,-1){};
	\draw[line width=1.5pt,draw,black] (c) -- (d);
	\draw[line width=1.5pt,draw,black] (c) -- (5,0.5);
	\draw[line width=1.5pt,draw,black] (d) -- (5,-1);
	
	\draw[Red, line width=1.5pt,opacity=0.4, fill] (8,-2.5) circle (0.3cm);	
	\node at (8.75,-2.5){\small\textcolor{Red}{$\in J_1$}};
	\node[draw,circle,line width=1pt,fill=black](e) at (8,-2.5){};
	\draw[line width=1.5pt,draw,black] (e) -- (5,-2);
	\draw[line width=1.5pt,draw,black] (e) -- (5,-2.5);
	\draw[line width=1.5pt,draw,black] (e) -- (5,-3);
	
	\draw[line width=7.5pt,draw,gray] (5,5) -- (5,-3.5);
	
	\end{tikzpicture}
	
    }
    
\caption{Illustration of the structure of $R$, in the proof of Theorem~\ref{theorem:max-degree-5}.
\label{figure:max-deg5}}
\end{figure}
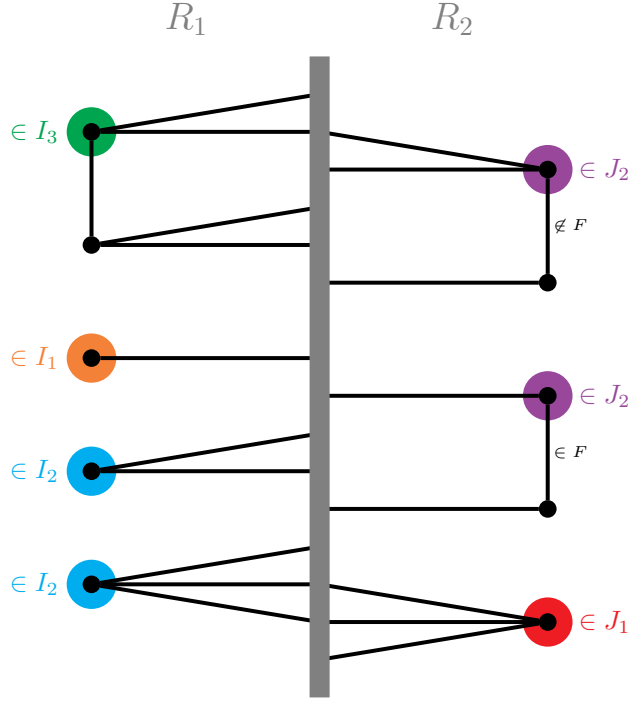

Additionally (see Figure~\ref{figure:max-deg5}), let us also set $$J_1=\{v\in R_2~|~d_{R_2}(v) = 0\} {\rm ~and~} F=\{uv\in E(R_2)~|~d_R(u)\le 2 {\rm ~and~} d_R(v) \le 2\}.$$ Then, for every edge $e$ in $E(R_2) \setminus F$, we can pick an incident vertex $v_e$ with $d_R(v)=3$ (recall the second item of Lemma~\ref{lemma:cut-max3}). For every edge $e$ in $F$, we also pick an arbitrary incident vertex $v_e$. Add all these chosen $v_e$'s to a set $J_2$, and, finally, set $J_3 = R_2 \setminus J_1 \setminus J_2$, the set of all non-picked vertices of $R_2$. Clearly, $J_i$ is independent for all $i \in \{1,2,3\}$.

We are now ready to describe how to totally $2$-label $G$ in a product-proper way. Consider $\ell$, the following total $2$-labelling of $G$.
We start from all vertices and edges of $G$ assigned label~$1$ by $\ell$.
Then, step by step, we perform the following changes to $\ell$.

\begin{enumerate}
	\item For all edges in $(R_1,R_2)$, $R_1$, and $M$, we change their label to~$2$.

    \item For all vertices $v \in Y$ not incident to an edge of $M$, we change to $2$ the label assigned to $v$. As a result, $\pi(v)=2^1$ for every $v\in Y$.

    \item For all vertices $v$ in $R_1 \setminus I_1$, we change to $2$ the label assigned to $v$. As a result, $\pi(v) \in \{2^4,2^5\}$ for every $v \in R_1 \setminus I_1$; recall in particular that, for every edge of $R_1$, both incident vertices have degree~$3$ in $R$, by the third item of Lemma~\ref{lemma:cut-max3}. In particular, $\pi(v) = 2^5$ for every  $v \in I_3$, and  $\pi (v) = 2^4$ for every  $v \in R_1 \setminus I$.

    \item For all vertices in $J_1$, we change, if necessary, their labels so that their products lie in $\{2^2,2^3\}$. This is possible because vertices of $J_1$ are of degree $1$, $2$, or $3$ in $R$, and all their incident edges in $R$ (all across $(R_1,R_2)$) are assigned label~$2$.

    \item For all edges in $F$, we change to $2$ the label assigned to them, while we keep all edges in $E(G[R_2]) \setminus F$ assigned label~$1$. As a result, every vertex in $J_2$ gets incident to exactly two edges assigned label~$2$, while every vertex in $J_3$ gets incident to exactly one or two edges assigned label~$2$ (one of which belongs to $R_2$).

    \item For all vertices in $J_2$, we change to $2$ the label assigned to them, so that their products become $2^3$. Then, for all vertices in $J_3$, we change to $2$, if necessary, the label assigned to them so that their products become $2^2$. This is possible due to the previous step.
    
	\item Finally, for all vertices in $I_1$, we change to $2$, if necessary, the label assigned to them so that each has product in  $\{2^2,2^3\}$ different from the product of its unique neighbour in $R$ (actually lying in $R_2$). This is possible because vertices in $I_1$ are incident to exactly two edges assigned label $2$, one lying across $(R_1,R_2)$ and the other in $M$.
\end{enumerate}

We claim the resulting $\ell$ is product-proper.
First, we have $\pi(v)=2^0$ for all $v \in X$.
For all $v \in Y$, note that, regardless whether $v$ is incident to an edge of $M$, 
we have $\pi(v)=2^1$.
Meanwhile, $\pi(v) \in \{2^2,2^3,2^4,2^5\}$ for all $v \in V(R)$.
Thus, conflicts can only occur in $R$.

By the last labelling step above, vertices of $I_1$ cannot be involved in conflicts. Then, there cannot be any conflict between vertices of $R_1$ and $R_2$ since $\pi(v) \in \{2^4,2^5\}$ for all $v \in R_1 \setminus I_1$ while $\pi(v) \in \{2^2,2^3\}$ for all $v \in R_2$.
It remains to discuss possible conflicts along edges of $R_1$, and similarly along edges of $R_2$. Along edges of $R_1$, there cannot be conflicts, since, by the third labelling step above, for each of them an incident vertex lies in $I_3$ and is of product $2^5$, while the second incident vertex is of product~$2^4$.
Likewise, along edges of $R_2$, by the sixth labelling step above, one incident vertex lies in $J_2$ and is of product $2^3$, while the second incident vertex is of product $2^2$.

Thus, $\ell$ is product-proper, as desired, and the result holds. 

\section{Proof of Theorem~\ref{theorem:max-degree-6}}\label{section:degree6}

In essence, our proof of Theorem~\ref{theorem:max-degree-6} goes the same way as that of Theorem~\ref{theorem:max-degree-5}. That is, we essentially start from two maximum independent sets $X$ and $Y$, in which we aim vertices to have product $2^0$ and $2^1$, respectively, by some total $2$-labelling, while we aim vertices in $G-X-Y$ to have larger products, all the while making sure that adjacent vertices of $G-X-Y$ have distinct products. And for that, again, we will again use the fact that, under certain circumstances, by Hall's Theorem, edges joining $Y$ and some vertices of $G-X-Y$ can have their labels modified if some further help is required.

The main difference here however, is that $G$ being now of maximum degree~$6$, we get $\Delta(G-X-Y) \leq 4$, which roughly implies that a maximum cut $(R_1,R_2)$ of $G-X-Y$ satisfies $\Delta(R_1),\Delta(R_2) \leq 2$, and thus both $G[R_1]$ and $G[R_2]$ are disjoint unions of cycles, paths, and isolated vertices. For these reasons, $2$-labelling $G-X-Y$ in a total way as desired is not as easy here, implying that we will not be able to achieve products as nice as those we produced in the proof of Theorem~\ref{theorem:max-degree-5}. More precisely, this will be mostly because of what we call \textbf{bad edges} in $R_1$ and $R_2$. Formally, an edge $uv$ of $R_1$ is \textit{bad} if $d_{R_1}(u)=d_{R_1}(v)=1$ and $d_{R_2}(u)=d_{R_2}(v)=2$. Likewise, an edge $uv$ of $R_2$ is \textit{bad} if $d_{R_2}(u)=d_{R_2}(v)=1$ and $d_{R_1}(u)=d_{R_1}(v)=3$. W.r.t.~the cut $(R_1,R_2)$, we denote by $\bd(R_1)$ and $\bd(R_2)$ the number of bad edges of $R_1$ and $R_2$, respectively.

Before continuing, we prove a result alike Lemma~\ref{lemma:cut-max3}, but for maximum degree~$4$.

\begin{lemma}\label{lemma:cut-max4}
If $G$ is a graph with $\Delta(G) \leq 4$, and $(V_1,V_2)$ is a maximum cut of $G$ that first minimises $\Delta(G[V_2])$, then minimises the number of vertices in $V_1$ which have degree 2 in $G$, and finally minimises $\bd(V_1)+\bd(V_2)$, then:

\begin{enumerate}
    \item $\Delta(G[V_1]) \le 2$ and $\Delta(G[V_2])\le 1$;
    
    \item every vertex of degree~$2$ in $G[V_1]$ has degree~$4$ in $G$;
    
    \item every vertex of degree~$1$ in $G[V_1]$ has degree at least~$3$ in $G$;
    
    \item every vertex $u$ of $G[V_2]$ incident to a bad edge does not have two neighbours $v$ and $v'$ in $G[V_1]$ belonging to different bad edges.
\end{enumerate} 
\end{lemma}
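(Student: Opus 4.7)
My plan is to prove each item by exploiting the nested optimality constraints on $(V_1,V_2)$ in the order of the tie-breakers, much as in Lemma~\ref{lemma:cut-max3}. The main computational tool will be the cut-change formula for a swap $u\leftrightarrow z$ with $u\in V_2$ and $z\in V_1$: the cut size changes by $d_{V_2}(u)-d_{V_1}(u)+d_{V_1}(z)-d_{V_2}(z)+2[uz\in E(G)]$. Together with the max-cut inequality $d_{V_i}(x)\le d_{V_{3-i}}(x)$ for every $x\in V_i$ and the bound $\Delta(G)\le 4$, this underlies all four arguments.

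For item~1, the max-cut inequality and $\Delta(G)\le 4$ immediately yield $\Delta(G[V_i])\le 2$ for both $i$. To upgrade to $\Delta(G[V_2])\le 1$, suppose for contradiction that some $v\in V_2$ has $d_{V_2}(v)=2$; then necessarily $d_{V_1}(v)=2$, so moving $v$ to $V_1$ preserves the cut size, and a direct count shows that the potential $\sum_{y\in V_2}\max(d_{V_2}(y)-1,0)$ strictly decreases, contradicting the first minimality. Item~2 is then immediate: if $v\in V_1$ has $d_{V_1}(v)=2$, max-cut forces $d_{V_2}(v)\ge 2$, so $\Delta(G)\le 4$ gives $d_G(v)=4$.

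For item~3, assume $v\in V_1$ has $d_{V_1}(v)=1$ and $d_G(v)\le 2$. The case $d_G(v)=1$ already violates max-cut, so $d_G(v)=2$ and $v$ has a unique $V_2$-neighbour $u$. If $d_{V_2}(u)=0$, then simply moving $v$ to $V_2$ preserves both the cut size and $\Delta(G[V_2])\le 1$ while strictly decreasing the number of degree-$2$ vertices of $V_1$, contradicting the second minimality. If instead $d_{V_2}(u)=1$, I would apply the swap $u\leftrightarrow v$: the cut-change formula evaluates to $3-d_{V_1}(u)$, which is strictly positive when $d_{V_1}(u)\in\{1,2\}$ (contradicting max-cut), and zero when $d_{V_1}(u)=3$; in this last subcase the resulting maximum cut still satisfies $\Delta(G[V_2])\le 1$ and strictly reduces the second criterion, again a contradiction.

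Item~4 is the main obstacle, and is where the third tie-breaker must be used. Assume that $u\in V_2$ belongs to a bad edge $uu'$ (so $d_{V_1}(u)=d_{V_1}(u')=3$ and $d_{V_2}(u)=d_{V_2}(u')=1$) and that two of its $V_1$-neighbours $v_1,v_2$ lie in distinct bad edges $v_1w_1$ and $v_2w_2$ of $V_1$. The plan is to exhibit a sequence of vertex moves yielding another maximum cut that still fulfils the first two optimality criteria but contains strictly fewer bad edges. The natural candidate is the swap $u\leftrightarrow v_1$, which by the cut-change formula costs $1$ from the cut size, and must therefore be paired with a compensating move — most plausibly a second swap involving $u'$ and $w_1$, or an endpoint of the other bad edge $v_2w_2$ — whose net effect restores maximality while destroying $uu'$. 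The delicate part will be to verify that the compensating move neither breaks the optimality of the first two criteria nor creates new bad edges at the affected vertices; this will call for a careful case analysis exploiting the very rigid degree splits imposed by the hypothesis (all of $u,u',v_1,v_2,w_1,w_2$ have $G$-degree $3$ or $4$ with prescribed $V_1/V_2$ distributions), and in particular the fact that the distinctness of the bad edges $v_1w_1$ and $v_2w_2$ rules out the various configurations in which the two moves would ``undo'' each other on the bad-edge count.
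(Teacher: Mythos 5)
Items 1--3 of your proposal are correct and essentially coincide with the paper's proof: the max-cut inequality gives $\Delta(G[V_i])\le 2$; repeatedly moving degree-$2$ vertices of $V_2$ across (your potential $\sum_{y\in V_2}\max(d_{V_2}(y)-1,0)$ merely makes the termination of that process explicit) yields $\Delta(G[V_2])\le 1$; item~2 is immediate; and your swap $u\leftrightarrow v$ in the subcase $d_{V_2}(u)=1$ of item~3 is exactly the paper's move of $v$ into $V_2$ and of its $V_2$-neighbour into $V_1$, with the same conclusion $d_G(v)=2$, $d_G(u)=4$.

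Item~4, however, is not proved: you explicitly leave ``the delicate part'' to an unexecuted case analysis, and the candidate move you name does not work. Your opening swap $u\leftrightarrow v_1$ costs $1$, and the ``compensating'' swap $u'\leftrightarrow w_1$ you suggest is itself strictly cut-decreasing: by your own formula it changes the cut by $d_{V_2}(u')-d_{V_1}(u')+d_{V_1}(w_1)-d_{V_2}(w_1)+2[u'w_1\in E(G)]=1-3+1-2+2[u'w_1\in E(G)]\le -1$, so pairing the two swaps cannot restore maximality, and no verified alternative is offered. The paper's move is structurally different: it moves \emph{both} $V_1$-neighbours of $u$ that lie in distinct bad edges (your $v_1$ and $v_2$) into $V_2$ and moves $u$ alone into $V_1$; this preserves the cut size precisely because the edges $uv_1$ and $uv_2$ remain in the cut while the three bad edges $uu'$, $v_1w_1$, $v_2w_2$ all enter it, and it decreases $\bd(V_1)+\bd(V_2)$ by at least~$3$ without increasing the number of degree-$2$ vertices of $V_1$. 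Even then the argument is not finished: this move can raise $\Delta(G[V_2])$ to~$2$ at neighbours of $v_1,v_2$ other than $u,w_1,w_2$, which the paper repairs by moving up to two further degree-$4$ vertices $w,w'$ back into $V_1$ and then verifying that this creates no new bad edge of $V_2$ --- the verification being exactly where $\Delta(G[V_2])\le 1$ for the original cut is invoked again. None of these steps is anticipated in your sketch, so item~4 remains a genuine gap.
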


\begin{proof}
Because $(V_1,V_2)$ is a maximum cut, recall that, for every vertex $v \in V_1$ (respectively, $v \in V_2$),
obviously we have $d_{V_2}(v) \geq d_{V_1}(v)$ (respectively, $d_{V_1}(v) \geq d_{V_2}(v)$).
From this, because $\Delta(G) \leq 4$, necessarily we have $\Delta(V_1),\Delta(V_2) \leq 2$.
Observe further that if $V_2$ has a vertex $v$ with $d_{V_2}(v)=2$, then $d_{V_1}(v)=2$,
and by moving $v$ from $V_2$ to $V_1$ we would obtain another maximum cut of $G$.
By then repeating this moving process while $V_2$ has degree-$2$ vertices, 
eventually we would reach a maximum cut $(V_1',V_2')$ of $G$ with $\Delta(V_2') \leq 1$,
contradicting that $(V_1,V_2)$ minimises $\Delta(V_2)$.
This proves the first item.

Clearly, since $(V_1,V_2)$ is a maximum cut, for any vertex $u$ in $V_1$ with $d_{V_1}(u)=2$ we have $d_{V_2}(u) \geq 2$ and thus $d_G(u)=4$ since $\Delta(G) \leq 4$.
This proves the second item.

Now consider the third item.
Assume $V_1$ contains a vertex $v$ with $d_{V_1}(v)=1$; let $u$ denote the unique neighbour of $v$ in $V_1$.
Then, $d_{V_2}(v) \geq 1$.
If $d_{V_2}(v) \geq 2$, then $d_G(v) \geq 3$, as claimed.
Assume now that $d_{V_2}(v)=1$, and let $v'$ denote the sole neighbour of $v$ in $V_2$.
Recall that $\Delta(V_2) \leq 1$.
If $d_{V_2}(v')=0$, then, by moving $v$ to $V_2$, we obtain another maximum cut $(V_1',V_2')$ of $G$,
with $\Delta(V_1') \leq 2$, $\Delta(V_2') \leq 1$, and where the number of vertices in $V_1'$ which have degree 2 in $G$ is strictly smaller than that of $V_1$, contradicting our assumption. If $d_{V_2}(v')=1$, then denote by $u'$ the unique neighbour of $v'$ in $V_2$.
In this case, we consider the cut $(V_1',V_2')$ of $G$ obtained from $(V_1,V_2)$ by moving $v$ to $V_2$ and $v'$ to $V_1$.
Since $(V_1,V_2)$ is a maximum cut, necessarily $d_G(v)=2$ and $d_G(v')=4$ and thus $(V_1',V_2')$ is still a maximum cut of $G$.
Also, we have $d_{V_1'}(v')=2$, $d_{V_1'}(u)\leq d_{V_1}(u) \leq 2$, and $d_{V_2'}(v)=d_{V_2'}(u')=0$.
Note that the degrees of the two neighbours of $v'$ in $V_1'$ might have increased when moving $v'$ from $V_2$ to $V_1'$.
However, since $(V_1',V_2')$ is a maximum cut, we necessarily have $\Delta(V_1') \leq 2$ since $\Delta(G) \leq 4$.
Thus, we have $\Delta(V_1') \leq 2$ and $\Delta(V_2') \leq 1$.
However, recall that we moved from $V_1$ to $V_2$ a vertex $v$ such that $d_G(v)=2$, which contradicts that $(V_1,V_2)$ minimises the number of degree-$2$ vertices in $V_1$. This proves the third item.

Now focus on the fourth item.
Assume $V_2$ contains a bad edge $u_1u_2$ where, say, $u_1$ is adjacent to two vertices $v_1$ and $v_1'$ of $G_1$ such that $v_1v_2$ and $v_1'v_2'$ are two distinct bad edges of $V_1$.
By the definition of bad edges, each of $v_1,v_2,v_1',v_2'$ has two neighbours in $V_2$, while each of $u_1u_2$ has three neighbours in $V_1$.
Let us here consider $(V_1',V_2')$, the cut of $G$ obtained from $(V_1,V_2)$ by moving $v_1$ and $v_1'$ to $V_2$, and $u_1$ to $V_1$.
It can be checked that $(V_1',V_2')$ must be a maximum cut of $G$.
Also, the number of vertices of $V_1'$ which have degree 2 in $G$ is less than that of $V_1$,
and $\bd(V_1')+\bd(V_2') \leq \bd(V_1)+\bd(V_2)-3$ (particularly, 
$d_G(u_1)=4$ so $u_1$ cannot belong to a bad edge of $V_1'$;
and $d_{G}(v_1)=d_{G}(v_1')=3$ so both $v_1$ and $v_1'$ cannot belong to bad edges of $V_2'$).
So, if $(V_1',V_2')$ does not contradict the choice of $(V_1,V_2)$,
then it must be because $\Delta(V_2') > \Delta(V_2)$.
Since $\Delta(V_2)=1$, we have $\Delta(V_2') \geq 2$.
Actually, since $(V_1',V_2')$ is a maximum cut, recall that we cannot have $\Delta(V_2')>2$.
So assume now $\Delta(V_2')=2$. Note that the only vertices of $V_2'$ of degree~$2$ we might have created are neighbours of $v_1$ and $v_1'$ not in $\{u_1,v_2,v_2'\}$.
So, $V_2'$ can contain at most two vertices $w$ and $w'$ of degree~$2$, which have degree~$2$ in $V_1'$ (and are thus of degree~$4$ in $G$).
Let $(V_1'',V_2'')$ be the cut of $G$ obtained from $(V_1',V_2')$ by moving $w$ and $w'$ (if any) to $V_1'$.
Again, $(V_1'',V_2'')$ is a maximum cut of $G$, and the number of vertices of $V_1''$ which have degree 2 in $G$ is at most that of $V_1'$.
Also, we now have $\Delta(V_2'') \leq \Delta(V_2)$.
Now, since $w$ and $w'$ have degree~$2$ in $V_1''$, the number of bad edges of $V_1''$ is at most that of $V_1'$,
while the only bad edges we might have created in $V_2''$ involve an edge $xy$ where, say, $x$ is adjacent to $w$ or $w'$. Note that $w$ or $w'$ are in $V_2$ originally, and moreover $y$ is also in $V_2$ since $y$, because incident to a bad edge of $V_2''$, cannot originally be $v_1$ or $v_1'$.
Thus we had $d_{V_2}(x)=2$ and $\Delta(V_2)=2$, a contradiction.
So, $(V_1'',V_2'')$ is a better cut of $G$ than $(V_1,V_2)$, which is a contradiction.
The fourth item thus holds as well.
\end{proof}

Due to Lemma~\ref{lemma:cut-max4}, we can thus further suppose that $G[R_2]$ is collection of edges and isolated vertices. Before proceeding with the actual proof, let us provide a few, additional details. When $2$-labelling $R=G-X-Y$ in a total way, we will try to achieve products $2^5$ and $2^6$ for vertices in $R_1$, and products $2^2$ and $2^3$ for vertices in $R_2$. We call these values \textbf{intended products} (for vertices of $R_1$ and $R_2$). Of course, if we would modify labels so that all vertices of $R_1$ and $R_2$ get an intended product, while still guaranteeing adjacent vertices in $R_1$, and similarly in $R_2$, have distinct products, then we would be done. Unfortunately, this is not easy to guarantee in general, especially along bad edges. We will then have to allow vertices to have products different from their intended ones, in contexts where conflicts cannot occur. Precisely, we break our labelling process into the following steps. 

\begin{itemize}
    \item In steps~1-9, we will modify the labelling so that the most vertices possible get an intended product. This will lead to a classification of the other, faulty vertices into five different sets (\textit{types}) $T_1,\dots,T_5$, where $T_1,T_2,T_3 \subseteq R_1$ and $T_4,T_5 \subseteq R_2$. Furthermore, $T_1,T_2,T_4$ will contain faulty vertices not incident to a bad edge, while $T_3$ and $T_5$ will contain faulty vertices incident to a bad edge of $R_1$ and $R_2$, respectively. These sets of vertices, which will be clarified throughout steps (in particular, $T_3$ and $T_5$ are to be defined much later in the process), are illustrated in later Figures~\ref{figure:max-deg6-types-1} and~\ref{figure:max-deg6-types-2}.
    
    \item In step~10, we will make local label modifications to get rid of conflicts involving vertices of $T_1$. These possible conflicts also involve vertices of $T_4$, and vertices incident to bad edges of $R_2$.
    
    \item In step~11, we will show how conflicts involving vertices of $T_2$ can be dealt with easily, essentially by modifying their labels and the labels assigned to incident so-called \textit{flexible edges} (which, in rough words, are edges that can freely be assigned any of labels~$1$ and~$2$, which can be balanced by other free local label modifications). These conflicts, however, will not be solved right away, because flexible edges will also show helpful, later on, to finish solving conflicts involving vertices of $T_4$. 
    
    \item In steps~12 and~13, we will deal with conflicts involving vertices from certain bad edges. As a result, we will then be able (through Lemma~\ref{lemma:GB-structure}) to characterise the structure of all remaining bad edges, leading to a modelling bipartite graph of maximum degree~$2$.
    
    \item Through steps~14 to~17, we will deal with the remaining bad edges, according to the structure of aforementioned modelling bipartite graph. This will lead us to defining the sets $T_3$ and $T_5$, and to perform label modifications to make sure vertices of $T_3 \cup T_5$ are not involved in conflicts, and similarly for vertices incident to all bad edges.
    
    \item Finally, in step~18, we will get rid of the last conflicts, which all involve vertices of $T_4$. For that, we will mostly play with modifying labels assigned to flexible edges defined throughout all previous steps.
\end{itemize}

We are now ready for the formal proof.
We may assume $G$ is connected.
Let $X$ be a maximum independent set of $G$, and $Y$ be a maximum independent set of $G-X$.
Set $R=G-X-Y$.
Then, $\Delta(R) \leq 4$.
Let further $(R_1,R_2)$ be a maximum cut of $R$ that first minimises $\Delta(R_1)$, then minimises the number of vertices of $R_1$ which have degree 2 in $R$, and finally minimises $\bd(R_1)+\bd(R_2)$.
We start from $\ell$, the total $2$-labelling of $G$ where all vertices and edges are assigned label~$1$, which, as mentioned, will be modified through several steps.
Through the first steps, our goal is to deal with the products of vertices of $R_2$ that are not incident to a bad edge. Recall that edges in $R_2$ form a matching, and that bad edges of $R_2$ are edges where the two incident vertices have exactly three neighbours in $R_1$.

\begin{figure}[!t]
 	\centering
 	
 	\subfloat[$\in T_4$]{
    \scalebox{0.75}{
	\begin{tikzpicture}[inner sep=0.7mm]

	\node at (4,5.5){\Large\textcolor{gray}{$R_1$}};
	\node at (6,5.5){\Large\textcolor{gray}{$R_2$}};
	
	\draw[Red, line width=1.5pt,opacity=0.4, fill] (7,3.5) circle (0.3cm);	
	\node at (7.75,3.5){\small\textcolor{Red}{$\in T_4$}};
	\node[draw,circle,line width=1pt,fill=black](a) at (7,3.5){};
	\draw[line width=1.5pt,draw,black] (a) -- (5,4);
	\draw[line width=1.5pt,draw,black] (a) -- (5,4.5);
	\draw[line width=1.5pt,draw,black] (a) -- (5,3);
	\draw[line width=1.5pt,draw,black] (a) -- (5,2.5);
	
	\draw[line width=7.5pt,draw,gray] (5,5) -- (5,2);
	
	\end{tikzpicture}
    }
    }
    \subfloat[$\in T_1$]{
    \scalebox{0.75}{
	\begin{tikzpicture}[inner sep=0.7mm]

	\node at (4,5){\Large\textcolor{gray}{$R_1$}};
	\node at (6,5){\Large\textcolor{gray}{$R_2$}};
	
	\draw[Red, line width=1.5pt,opacity=0.4, fill] (3,3.5) circle (0.3cm);	
	\node at (2.25,3.5){\small\textcolor{Red}{$\in T_1$}};
	\node[draw,circle,line width=1pt,fill=black](a) at (3,3.5)[label=above left:\small $v_C$]{};
	\draw[Green, line width=1.5pt,opacity=0.4, fill] (3,2.5) circle (0.3cm);	
	\node[draw,circle,line width=1pt,fill=black](b) at (3,2.5){};
	\node[draw,circle,line width=1pt,fill=black](c) at (3,1.5){};
	\draw[Green, line width=1.5pt,opacity=0.4, fill] (3,0.5) circle (0.3cm);	
	\node[draw,circle,line width=1pt,fill=black](d) at (3,0.5){};
	\node[draw,circle,line width=1pt,fill=black](e) at (3,-0.5){};
	\draw[line width=1.5pt,draw,black] (a) -- (b);
	\draw[line width=1.5pt,draw,black] (b) -- (c);
	\draw[line width=1.5pt,draw,black] (c) -- (d);
	\draw[line width=1.5pt,draw,black] (d) -- (e);
	\draw[line width=1.5pt,draw,black] (e) to[bend left] (a);
	\draw[line width=1.5pt,draw,black] (a) -- (5,3.5);
	\draw[line width=1.5pt,draw,black] (a) -- (5,4);
	\draw[line width=1.5pt,draw,black] (b) -- (5,2.5);
	\draw[line width=1.5pt,draw,black] (b) -- (5,3);
	\draw[line width=1.5pt,draw,black] (c) -- (5,1.5);
	\draw[line width=1.5pt,draw,black] (c) -- (5,2);
	\draw[line width=1.5pt,draw,black] (d) -- (5,0.5);
	\draw[line width=1.5pt,draw,black] (d) -- (5,1);
	\draw[line width=1.5pt,draw,black] (e) -- (5,-0.5);
	\draw[line width=1.5pt,draw,black] (e) -- (5,0);
	
	\draw[line width=7.5pt,draw,gray] (5,4.5) -- (5,-1);
	
	\end{tikzpicture}
    }
    }
    \subfloat[$\in T_1$]{
    \scalebox{0.75}{
	\begin{tikzpicture}[inner sep=0.7mm]

	\node at (4,5){\Large\textcolor{gray}{$R_1$}};
	\node at (6,5){\Large\textcolor{gray}{$R_2$}};
	
	\draw[Green, line width=1.5pt,opacity=0.4, fill] (3,3.5) circle (0.3cm);	
	\node[draw,circle,line width=1pt,fill=black](a) at (3,3.5){};
	\draw[Red, line width=1.5pt,opacity=0.4, fill] (3,2.5) circle (0.3cm);	
	\node at (2.25,2.5){\small\textcolor{Red}{$\in T_1$}};
	\node[draw,circle,line width=1pt,fill=black](b) at (3,2.5){};
	\draw[Green, line width=1.5pt,opacity=0.4, fill] (3,1.5) circle (0.3cm);	
	\node[draw,circle,line width=1pt,fill=black](c) at (3,1.5){};
	\draw[Red, line width=1.5pt,opacity=0.4, fill] (3,0.5) circle (0.3cm);	
	\node at (2.25,0.5){\small\textcolor{Red}{$\in T_1$}};
	\node[draw,circle,line width=1pt,fill=black](d) at (3,0.5){};
	\draw[Green, line width=1.5pt,opacity=0.4, fill] (3,-0.5) circle (0.3cm);	
	\node[draw,circle,line width=1pt,fill=black](e) at (3,-0.5){};
	\draw[line width=1.5pt,draw,black] (a) -- (b);
	\draw[line width=1.5pt,draw,black] (b) -- (c);
	\draw[line width=1.5pt,draw,black] (c) -- (d);
	\draw[line width=1.5pt,draw,black] (d) -- (e);
	\draw[line width=1.5pt,draw,black] (a) -- (5,3.5);
	\draw[line width=1.5pt,draw,black] (a) -- (5,4);
	\draw[line width=1.5pt,draw,black,dotted] (a) -- (5,3);
	\draw[line width=1.5pt,draw,black] (b) -- (5,2.5);
	\draw[line width=1.5pt,draw,black] (b) -- (5,3);
	\draw[line width=1.5pt,draw,black] (c) -- (5,1.5);
	\draw[line width=1.5pt,draw,black] (c) -- (5,2);
	\draw[line width=1.5pt,draw,black] (d) -- (5,0.5);
	\draw[line width=1.5pt,draw,black] (d) -- (5,1);
	\draw[line width=1.5pt,draw,black] (e) -- (5,-0.5);
	\draw[line width=1.5pt,draw,black,dotted] (e) -- (5,0);
	\draw[line width=1.5pt,draw,black] (e) -- (5,-1);
	
	\draw[line width=7.5pt,draw,gray] (5,4.5) -- (5,-1.5);
	
	\end{tikzpicture}
    }
    }
    \subfloat[$\in T_1$ and $\in T_2$]{
    \scalebox{0.75}{
	\begin{tikzpicture}[inner sep=0.7mm]

	\node at (4,5){\Large\textcolor{gray}{$R_1$}};
	\node at (6,5){\Large\textcolor{gray}{$R_2$}};
	
	\draw[Red, line width=1.5pt,opacity=0.4, fill] (3,3.5) circle (0.3cm);	
	\node at (2.25,3.5){\small\textcolor{Red}{$\in T_2$}};
	\node[draw,circle,line width=1pt,fill=black](a) at (3,3.5)[label=above left:\small $v_P$]{};
	\draw[Green, line width=1.5pt,opacity=0.4, fill] (3,2.5) circle (0.3cm);	
	\node[draw,circle,line width=1pt,fill=black](b) at (3,2.5)[label=above left:\small $u_P$]{};
	\node at (2.25,1.5){\small\textcolor{Red}{$\in T_1$}};
	\draw[Red, line width=1.5pt,opacity=0.4, fill] (3,1.5) circle (0.3cm);	
	\node[draw,circle,line width=1pt,fill=black](c) at (3,1.5){};
	\draw[Green, line width=1.5pt,opacity=0.4, fill] (3,0.5) circle (0.3cm);	
	\node[draw,circle,line width=1pt,fill=black](d) at (3,0.5){};
	\draw[Red, line width=1.5pt,opacity=0.4, fill] (3,-0.5) circle (0.3cm);
	\node at (2.25,-0.5){\small\textcolor{Red}{$\in T_1$}};	
	\node[draw,circle,line width=1pt,fill=black](e) at (3,-0.5){};
	\draw[Green, line width=1.5pt,opacity=0.4, fill] (3,-1.5) circle (0.3cm);	
	\node[draw,circle,line width=1pt,fill=black](f) at (3,-1.5){};
	
	\draw[line width=1.5pt,draw,black] (a) -- (b);
	\draw[line width=1.5pt,draw,black] (b) -- (c);
	\draw[line width=1.5pt,draw,black] (c) -- (d);
	\draw[line width=1.5pt,draw,black] (d) -- (e);
	\draw[line width=1.5pt,draw,black] (e) -- (f);
	\draw[line width=1.5pt,draw,black] (a) -- (5,3.5);
	\draw[line width=1.5pt,draw,black] (a) -- (5,4);
	\draw[line width=1.5pt,draw,black,dotted] (a) -- (5,3);
	\draw[line width=1.5pt,draw,black] (b) -- (5,2.5);
	\draw[line width=1.5pt,draw,black] (b) -- (5,3);
	\draw[line width=1.5pt,draw,black] (c) -- (5,1.5);
	\draw[line width=1.5pt,draw,black] (c) -- (5,2);
	\draw[line width=1.5pt,draw,black] (d) -- (5,0.5);
	\draw[line width=1.5pt,draw,black] (d) -- (5,1);
	\draw[line width=1.5pt,draw,black] (e) -- (5,-0.5);
	\draw[line width=1.5pt,draw,black] (e) -- (5,0);
	\draw[line width=1.5pt,draw,black,dotted] (f) -- (5,-1);
	\draw[line width=1.5pt,draw,black] (f) -- (5,-1.5);
	\draw[line width=1.5pt,draw,black] (f) -- (5,-2);
	
	\draw[line width=7.5pt,draw,gray] (5,4.5) -- (5,-2.5);
	
	\end{tikzpicture}
    }
    }
    
    \subfloat[$\in T_2$]{
    \scalebox{0.75}{
	\begin{tikzpicture}[inner sep=0.7mm]

	\node at (4,5){\Large\textcolor{gray}{$R_1$}};
	\node at (6,5){\Large\textcolor{gray}{$R_2$}};
	
	\draw[Green, line width=1.5pt,opacity=0.4, fill] (3,3.5) circle (0.3cm);	
	\node[draw,circle,line width=1pt,fill=black](a) at (3,3.5){};
	\draw[Red, line width=1.5pt,opacity=0.4, fill] (3,2.5) circle (0.3cm);	
	\node[draw,circle,line width=1pt,fill=black](b) at (3,2.5){};
	\node at (2.25,2.5){\small\textcolor{Red}{$\in T_2$}};
	
	\draw[line width=1.5pt,draw,black] (a) -- (b);
	\draw[line width=1.5pt,draw,black] (a) -- (5,3.5);
	\draw[line width=1.5pt,draw,black] (a) -- (5,4);
	\draw[line width=1.5pt,draw,black] (a) -- (5,3);
	\draw[line width=1.5pt,draw,black] (b) -- (5,2.5);
	\draw[line width=1.5pt,draw,black] (b) -- (5,2);
	
	\draw[line width=7.5pt,draw,gray] (5,4.5) -- (5,1.5);
	
	\end{tikzpicture}
    }
    }
    \subfloat[$\in T_2$]{
    \scalebox{0.75}{
	\begin{tikzpicture}[inner sep=0.7mm]

	\node at (4,5){\Large\textcolor{gray}{$R_1$}};
	\node at (6,5){\Large\textcolor{gray}{$R_2$}};
	
	\node at (2.25,3.5){\small\textcolor{Red}{$\in T_2$}};
	\draw[Red, line width=1.5pt,opacity=0.4, fill] (3,3.5) circle (0.3cm);	
	\node[draw,circle,line width=1pt,fill=black](a) at (3,3.5){};
	
	\draw[line width=1.5pt,draw,black] (a) -- (5,3.5);
	\draw[line width=1.5pt,draw,black] (a) -- (5,4);
	
	\draw[line width=7.5pt,draw,gray] (5,4.5) -- (5,3);
	
	\end{tikzpicture}
    }
    }
    
\caption{Vertices of $T_1$, $T_2$, and $T_4$ in the proof of Theorem~\ref{theorem:max-degree-6}.
Vertices highlighted in red are faulty vertices.
Vertices highlighted in green are part of $I$, and thus incident to an edge of $M$. Dashed edges are edges that may or may not be in $R$.
\label{figure:max-deg6-types-1}}
\end{figure}
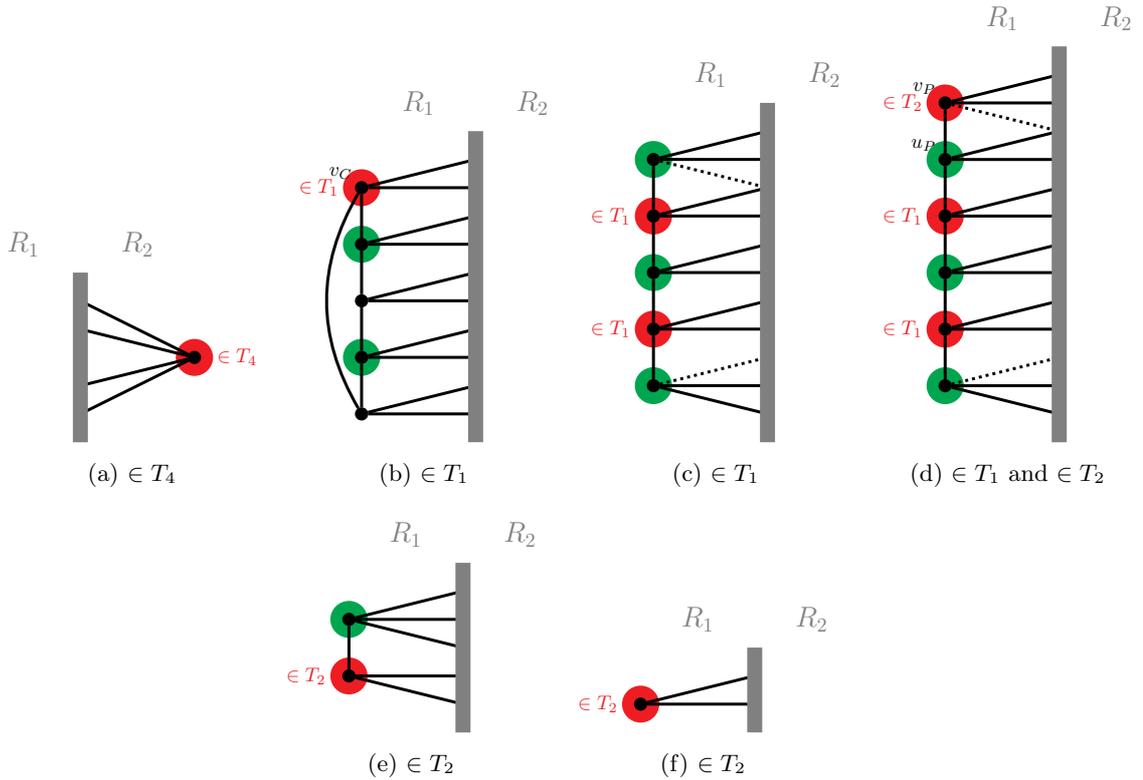

\begin{enumerate}
    \item[1.] We change to $2$ the labels assigned to all edges of $(R_1,R_2)$.
    
    \item[2.] Consider every non-bad edge $uv$ of $R_2$. Assume $d_{R_1}(u) \leq d_{R_1}(v)$ w.l.o.g. Since $uv$ is not bad, we have $d_{R_1}(u) \leq 2$. Also, since $(R_1,R_2)$ is a maximum cut of $R$, note that we cannot have $0 \in \{d_{R_1}(u),d_{R_2}(v)\}$.
    
    \begin{itemize}
        \item If $d_{R_1}(v) \leq 1$, then $d_{R_1}(u)=d_{R_1}(v)=1$. Here, we change to $2$ the labels assigned to $uv$ and $v$, so that $\pi(u)=2^2$ and $\pi(v)=2^3$, and $u$ and $v$ get intended products.
        
        \item Now, if $d_{R_1}(v) \geq 2$, then we change, if necessary, to $2$ the label assigned to $u$ so that $\pi(u)=2^2$. This is always possible since the only edges incident to $u$ assigned label~$2$ are one or two edges in $(R_1,R_2)$. Likewise, then we change, if necessary, to $2$ the label assigned to $v$ so that $\pi(v)=2^3$. Since $v$ is incident to two or three edges (in $(R_1,R_2)$) assigned label~$2$, this is possible. Thus, $u$ and $v$ get intended products.
    \end{itemize}
    
    \item[3.] Consider now every isolated vertex $v$ in $R_2$. If $d_{R_1}(v) \in \{1,2\}$, then we change to~$2$ the label assigned to $v$ so that $\pi(v) \in \{2^2,2^3\}$ (recall all edges in $(R_1,R_2)$ are assigned label~$2$). If $d_{R_1}(v)=3$, then we keep $\ell(v)=1$ so that $\pi(v)=2^3$. In both cases, note that $v$ gets an intended product. On the other hand, note that, currently (with all edges in $(R_1,R_2)$ assigned label~$2$), this cannot be achieved if $d_{R_1}(v)=4$; in such cases, we add $v$ to the aforementioned set $T_4$ (see Figure~\ref{figure:max-deg6-types-1}(a)).
\end{enumerate}

In the next steps, similarly we focus on the products of vertices of $R_1$ that are not incident to a bad edge. Recall that $\Delta(R_1) \leq 2$, so $R_1$ consists of cycles, paths, and isolated vertices. Recall as well that, by definition, bad edges of $R_1$ are (isolated) edges where both incident vertices have exactly two neighbours in $R_2$. Throughout the upcoming steps, we grow a set $I$ of independent vertices of $R_1$, the main purpose being then, just as in the proof of Theorem~\ref{theorem:max-degree-5}, to invoke Hall's Theorem to deduce a matching $M$ in $(Y,I)$ saturating $I$, standing as a way to further modify the products of the vertices in $I$, if needed. Namely, one should keep in mind, throughout, that all vertices in $I$ will eventually be incident to an additional edge (in $M$) assigned label~$2$.

\begin{enumerate}
    \item[4.] We change to~$2$ the labels assigned to all edges of $R_1$.
    
    \item[5.] Consider every even cycle $C$ of $R_1$. To begin with, we change to~$2$ the labels assigned to all vertices of $C$. We then pick a maximum independent set $I_C$ (of size $|C|/2$) of $C$ and add its vertices to $I$. By the second item of Lemma~\ref{lemma:cut-max4}, recall that every vertex of $C$ has two neighbours in $R_2$. Therefore, taking into account that all vertices of $I_C$ with eventually be incident to an additional edge (in $M$) assigned label~$2$, products along $C$ alternate between $2^5$ and $2^6$. Thus, vertices of $C$ get intended products. 
    
    \item[6.] Consider every odd cycle $C$ of $R_1$. First, we choose any vertex $v_C$ of $C$. Then, again, we pick a maximum independent set $I_C$ (of size $\lfloor |C|/2 \rfloor$) of $C - v_C$ and we add its vertices to $I$. Now, we change to~$2$ the labels assigned to all vertices of $C$ but $v_C$. As a result, vertices along $C-v_C$ have their products alternating between $2^5$ and $2^6$ (recall, again, that every vertex of $C$ is incident to exactly two edges of $(R_1,R_2)$, assigned label~$2$), being thus as intended. Last, we add $v_C$ to the set $T_1$ (see Figure~\ref{figure:max-deg6-types-1}(b)).
    
    \item[7.] Consider every odd path\footnote{The notion of odd/even paths is w.r.t.~their order.} $P$ of order at least~$3$ of $R_1$. We first pick the unique maximum independent set $I_P$ (of size $\lceil |P|/2 \rceil$) of $P$, which includes both end-vertices of $P$, and add the vertices of $I_P$ to $I$. Recall that, by the second and third items of Lemma~\ref{lemma:cut-max4}, inner vertices of $P$ are incident to exactly two edges of $(R_1,R_2)$ (thus assigned label~$2$), while end-vertices of $P$ are incident to two or three edges of $(R_1,R_2)$. Taking into account the edges of $M$ that will eventually be assigned label~$2$, inner vertices of $P$ in $I_P$ get product $2^6$, thus an intended product. For each end-vertex of $P$, it is also possible to change to $2$ its own label, if necessary, so that its product become $2^5$, thus an intended product as well. Last, we add the vertices of $P-I_P$ to $T_1$ (see Figure~\ref{figure:max-deg6-types-1}(c)).
    
    \item[8.] Consider every even path $P$ in $R_1$. Assume first that $P$ has length at least~$4$. Let $v_P$ be any end-vertex of $P$ with unique neighbour $u_P$ in $P$. We first pick the unique maximum independent set $I_P$ (of size $|P|/2$) of $P$ containing $u_P$, and add the vertices of $I_P$ to $I$. Then, for all $v \in I_P \setminus \{u_P\}$, we change to~$2$, if necessary, the label assigned to $v$ so that $\pi(v)=2^5$. Again, this is possible by similar reasons as earlier. Last, we change to $2$ the label assigned to $u_P$ so that $\pi(u_P)=2^6$, and we add the vertices of $P-(I_P \setminus \{v_P\})$ to $T_1$, and $v_P$ to $T_2$ (see Figure~\ref{figure:max-deg6-types-1}(d)). Note that, by modifications above, vertices of $I_P$ got intended products. Actually, changing the label of $v_Pu_P$ to $1$ would yield $\pi(u_P)=2^5$, still an intended product. In later stages of the labelling process, this modification will become necessary to modify the product of $v_P$ as desired; we thus say that $u_Pv_P$ is \textbf{flexible} w.r.t.~$v_P$.
    
    Consider now when $P=v_1v_2$ has length~$2$, and $P$ is not a bad edge. By definition, this means that, say, $v_1$ has three neighbours in $R_2$ (recall that both $v_1$ and $v_2$ have two or three neighbours in $R_2$, by the third item of Lemma~\ref{lemma:cut-max4}). We here add $v_1$ to $I$. Then, we change to~$2$ the label assigned to both $v_1$ and $v_2$ so that, taking into account the edge of $M$ incident to $v_1$, we get $\pi(v_1)=2^6$ (an intended product), and $\pi(v_2) \in \{2^4,2^5\}$. In case $\pi(v_2)=2^5$ (\textit{i.e.}, $d_{R_2}(v_2)=3$), the product of $v_2$ is actually an intended product. Otherwise (\textit{i.e.}, $d_{R_2}(v_2)=2$), we add $v_2$ to $T_2$ (see Figure~\ref{figure:max-deg6-types-1}(e)). Note that changing the label of $v_1v_2$ to~$1$ would preserve the fact that the product of $v_1$ is an intended one; thus, we regard $v_1v_2$ as an edge flexible w.r.t.~$v_2$.
    
    \item[9.] Consider every isolated vertex $v$ in $R_1$. First off, if $d_{R_2}(v) \in \{3,4\}$, then we add $v$ to $I$ and change to $2$ its label so that, taking into account an edge of $M$ incident to $v$, eventually we will get $\pi(v) \in \{2^5,2^6\}$, thus an intended product. 
    
    \begin{itemize}
        \item If $d_{R_2}(v)=1$, then, eventually, by the end of the process, it will always be possible, by setting the label of $v$ to $1$ or $2$, to modify the product of $v$ to some value in $\{2^2,2^3\}$ different from the product of its unique neighbour in $R_2$. As a result, although we do not perform such modifications right now, eventually it will always be possible to guarantee that $v$ is not involved in any conflict.
        
        \item Now, if $d_{R_2}(v)=2$, then we add $v$ to both $I$ and $T_2$ (see Figure~\ref{figure:max-deg6-types-1}(f)). Denoting by $vy$ the edge of $M$ incident to $v$ (where, recall, $y \in Y$), recall that, eventually, it will always be possible to achieve $\pi(y)=2^1$ regardless the label of $vy$ (if $vy$ is assigned label~$1$, then we can set the label of $y$ to $2$; while we can set it to $1$ otherwise). So we regard $vy$ as flexible w.r.t.~$v$.
    \end{itemize}
\end{enumerate}

We dealt with all components of $R_1$ and $R_2$ that are not a bad edge. In particular, the sets $T_1$, $T_2$, and $T_4$ are now defined. Observe that all vertices of $T_1$ (lying in $R_1$) have exactly two incident edges going to $R_2$ and two incident edges in $R_1$, all of which are currently assigned label~$2$. Moreover, all vertices of $T_1$ are currently assigned label~$1$, so their product is $2^4$. This implies that every vertex of $T_1$ cannot be in conflict with any of its neighbours in $R_2$ having an intended product (in $\{2^2,2^3\}$). Also, all vertices of $T_2$ (lying in $R_1$) have exactly two neighbours in $R_2$. More importantly, they are each incident to a flexible edge of which we can somewhat freely change the label, which will grant more freedom product-wise.

In the upcoming steps, we start dealing with bad edges of $R_1$ and $R_2$. In doing so, we will define sets the $T_3$ and $T_5$, containing vertices incident to bad edges. Essentially, for every bad edge we will be able to reach an intended product for one of the two end-vertices, while the other will be added to either $T_3$ or $T_5$ for further consideration later on. In what follows, we denote by $B$ the set of all bad edges; we then consider each edge of $B$ one by one, removing them from $B$ as they get treated.

\begin{enumerate}
    \item[10.] In this step, we deal with conflicts involving vertices in $T_1$ and vertices from bad edges in $R_2$. For that, we will essentially consider bad edges $e$ in $R_2$ incident to a vertex being in conflict with a vertex in $R_1$, deal with $e$, and eventually remove $e$ from $B$. As a consequence, this will also lead to removing some vertices of $T_4$, and illustrate how the remaining vertices from $T_4$ can be dealt with.
    
    Let us denote by $E_1^1$ the set of all edges (in $(R_1,R_2)$) joining a vertex of $T_1$ and one of $T_4$. For every bad edge $e$ of $R_2$ incident to a vertex adjacent to a vertex of $T_1$, we denote by $u_e$ this vertex incident to $e$, and by $v_e$ a neighbour of $u_e$ in $T_1$. We further let $E^2_1$ be the set of all edges (in $(R_1,R_2)$) of the form $v_eu_e$, and set $E_1=E_1^1\cup E_1^2$.
    
    Let now $v$ be any vertex of $T_1$. Recall that $v$ is incident to exactly two edges in $R_1$ and two edges in $(R_1,R_2)$, some of which might belong to $E_1$. Depending on the number of such edges in $E_1^1$ and $E_1^2$, we apply certain label modifications.
    
    \begin{itemize}
        \item If $v$ is not incident to any edge of $E_1$, then all neighbours of $v$ in $R_2$ have intended products. In this case, we keep things as is; in particular, $\pi(v)=2^4$.
        
        \item If $v$ is incident to exactly one edge of $E_1$, say $vu$, then we change to~$1$ the label assigned to $vu$ and we change to~$2$ the label assigned to $v$. As a result, $\pi(v)=2^4$. If $u \in T_4$, then $\pi(u)=2^3$, and we have reached an intended product. If $u$ is incident to a bad edge $e=uu'$ in $R_2$, then $\pi(u)=2^2$ and $\pi(u')=2^3$, and again we have obtained intended products. Thus, $v$, $u$, and their neighbours cannot be involved in conflicts. We then remove $u$ from $T_4$ and $e$ from $B$ in all cases.
        
        \item If $v$ is incident to two edges of $E_1^1$, or to two edges of $E_1^2$, say $vu$ and $vu'$, then we change to~$1$ the label assigned to both $vu$ and $vu'$. 
        
        \begin{itemize}
            \item If $u$ and $u'$ belong to $T_4$, then $\pi(u)=\pi(u')=2^3$, so $u$ and $u'$ get an intended product; while $\pi(v)=2^2$ and $v$ is not involved in any conflict. Then we remove $u$ and $u'$ from $T_4$.
            
            \item If $u$ and $u'$ are each incident to a bad edge, say $e=uw$ and $e'=u'w'$ are bad edges of $R_2$, then we know that, currently, $\pi(u)=\pi(u')=2^2$ and $\pi(w)=\pi(w')=2^3$. In this case, we change to $2$ the label assigned to $v$ so that $\pi(v)=2^3$ and $v$ is not involved in conflicts. By the definition of $E_1^2$, note that $e$ and $e'$ are different bad edges. And both vertices of each of $e$ and $e'$ now have intended products, so we can just remove $e$ and $e'$ from $B$.
        \end{itemize}
        
        \item Suppose last that $v$ is incident to exactly one edge $vu \in E_1^1$ and exactly one edge $vu' \in E_1^2$. Let us denote by $e'=u'w'$ the bad edge incident to $u'$. We here change to $1$ the label assigned to $vu'$, and change to $2$ the label assigned to $v$. As a result, $\pi(v)=2^4$, $\pi(u')=2^2$, and $\pi(w')=2^3$. We also remove $e'$ from $B$ since, again, its two incident vertices have reached an intended product.
        
        Note that, here, $v$ can still be in conflict with $u$; but this is the only possible conflict involving $v$. We here keep $u$ in $T_4$ and add $vu$ to a new set $E_4^1$ of edges, which will be used later on to fix conflicts (including that between $u$ and $u$) between vertices of $T_4$ and their neighbours. Observe also that $vu$ can be regarded as flexible w.r.t.~$v$ since changing the label of $vu$ to $1$ would result in $\pi(v)=2^3$ without creating a conflict between $v$ and $u'$. Note that any vertex incident to a flexible edge in $E_4^1$ will have final product $2^3$ if the label of that edge gets modified, and $2^4$ otherwise.
    \end{itemize}
    
    \item[11.] In this step, we discuss conflicts involving vertices in $T_2$ and their neighbours. Let $v \in R_1$ be a vertex of $T_2$. By definition, $v$ has exactly two neighbours $u_1$ and $u_2$ in $R_2$; also, we have defined a flexible edge $e=vv'$ incident to $v$, whose label can essentially be modified while preserving an intended product for $v'$. So, through changing the label assigned to $e$ and the label assigned to $v$, we can modify $\pi(v)$ to any of $2^2$, $2^3$, and $2^4$, and, in particular, there is always a way to guarantee $v$ does not get in conflict with any of $u_1$ and $u_2$. We do not perform such modifications right now, because they depend on the final products of vertices in $R_2$. However, readers should keep in mind that dealing with vertices of $T_2$ is an easy issue.
\end{enumerate}

At this point, we dealt with all bad edges in $R_2$ adjacent to vertices of $T_1$ (and removed them from $B$), and we explained how conflicts involving vertices of $T_2$ can be dealt with. From now on, we focus on the remaining bad edges, \textit{i.e.}, that have not been removed from $B$ yet. Through the next steps, we will primarily address conflicts involving adjacent vertices incident to bad edges in $R_1$ and $R_2$ under specific structures, and remove more of the bad edges remaining in $B$.

\begin{enumerate}
    \item[12.] Assume $R_1$ contains a bad edge $v_1v_2$ such that, say, $v_1$ has two neighbours $u$ and $u'$ each of which belongs to a bad edge of $R_2$. We deal with $v_1v_2$ in the following way. First, we add $v_2$ to $I$ and change to $2$ the label assigned to $v_2$. Taking into account that an edge of $M$ incident to $v_2$ will eventually be assigned label~$2$, we get $\pi(v_2)=2^5$, thus an intended product. We then consider two cases.
    
    \begin{itemize}
        \item Assume first $uu'$ is a bad edge of $R_2$. In this case, we change to $2$ the labels assigned to $u$ and $v_1$, change to $1$ the label assigned to $v_1u'$, keep $v_1u$ and $v_1v_2$ assigned label~$2$, and keep $u'$ and $uu'$ assigned label~$1$. As a result, we obtain $\pi(v_1)=2^3$, $\pi(u)=2^4$, and $\pi(u')=2^2$.
        
        \item Assume second that $u$ and $u'$ are incident to distinct bad edges of $R_2$, say $uw$ and $u'w'$, respectively. In this case, we change to $2$ the labels assigned to $u$ and $u'$, and keep $v_1$ assigned label~$1$. As a result, we obtain $\pi(u)=\pi(u')=2^4$, $\pi(w)=\pi(w')=2^3$, and $\pi(v_1)=2^3$.
    \end{itemize}
    
    Recall that, by the fourth item of Lemma~\ref{lemma:cut-max4}, vertices $u$ and $u'$ cannot be adjacent to vertices from other bad edges of $R_1$. Then it can be checked that, in both cases above, there cannot be remaining conflicts. Thus, we here remove $v_1v_2$ and the bad edges containing $u$ and $u'$ from $B$.
    
    Since the process above can be performed as long as there is a bad edge of $R_1$ with an incident vertex adjacent to two vertices belonging to bad edges of $R_2$, we can assume, in what follows, that no such bad edges remain in $R_1$.
    
    \item[13.] Assume $R_2$ contains a bad edge $u_1u_2$ such that $u_1$ and $u_2$ are adjacent to vertices incident to a same bad edge $v_1v_2$ of $R_1$. Due to the process performed in step 12, each of $v_1$ and $v_2$ can be adjacent to at most one of $u_1$ and $u_2$. Thus, w.l.o.g., we can assume $u_1$ is adjacent to $v_1$, and $u_2$ is adjacent to $v_2$. By what we performed during steps~10 and~11, vertices $u_1$ and $u_2$ cannot be adjacent to vertices in $T_1$ and will not be involved in conflicts with adjacent vertices in $T_2$. Also, by the fourth item of Lemma~\ref{lemma:cut-max4}, none of $u_1$ and $u_2$ can be adjacent to a vertex from another bad edge of $R_1$. This implies $u_1$ and $u_2$ are not in conflict with their neighbours different from $v_1$ and $v_2$. Let us denote by $u_3$ the other neighbour of $v_1$ in $R_2$; we consider two cases, namely whether $u_3$ lies in $T_4$ or not.
    
    \begin{itemize}
        \item Assume first $u_3$ belongs to $T_4$. In this case, we add $v_2$ to $I$, change to~$2$ the labels assigned to $v_2$, $u_2$, and $u_1$, and change to $1$ the label assigned to $v_1u_1$. As a result, we get $\pi(v_1)=2^2$, $\pi(v_2)=2^5$, $\pi(u_1)=2^3$, and $\pi(u_2)=2^4$. In particular, $u_1$ and $u_2$ are no longer in conflict with their neighbours. Furthermore, $v_2$ has reached an intended product. This implies that there cannot be conflicts involving any of $u_1$, $u_2$, $v_1$, and $v_2$ and its neighbours, but for $v_1$ and $u_3$. Note however that $v_1u_3$ is flexible w.r.t.~$v_1$ since changing its label to~$1$ while changing to $2$ the label assigned to $v_1$ leads to $\pi(v_1)=2^2$. So, for now, add $v_1u_3$ to a new set $E_4^2$ that will be used later on to get rid of conflicts (including that between $v_1$ and $u_3$) involving vertices of $T_4$. Note here that any vertex incident to a flexible edge in $E_4^2$ will have final product $2^2$ regardless whether the label of that edge is modified. Last, we delete $v_1v_2$ and $u_1u_2$ from $B$. 
        
        \item Assume second that $u_3$ does not belong to $T_4$. Thus, $u_3$ currently has an intended product. We here add $v_1$ to $I$, change to $2$ the labels assigned to $v_1$, $u_2$, and $u_1u_2$, and change to $1$ the labels assigned to $v_1u_1$ and $v_2u_2$. As a result, we get $\pi(v_1)=2^4$, $\pi(u_1)=2^3$, and $\pi(u_2)=2^4$. Let now $u_4$ denote the other neighbour of $v_1$ in $R_2$; then, we also change the label assigned to $v_2$, if necessary, so that $\pi(v_2) \in \{2^2,2^3\}$ and $\pi(v_2)$ is different from $\pi(u_4)$. Consequently, no conflict involves any of $u_1$, $u_2$, $v_1$, and $v_2$ and their neighbours, and we can delete $v_1v_2$ and $u_1u_2$ from $B$.
    \end{itemize}
\end{enumerate}

Given that we already dealt with a certain number of bad edges through steps~12 and~13, we can now describe how the remaining bad edges are structured. Namely, let $G_B$ be the adjacency graph of remaining bad edges (that is, $G_B$ has bipartition $(B_{R_1},B_{R_2})$ where each $B_{R_i}$ contains a vertex $e$ for every remaining bad edge $e$ of $R_i$, and two vertices $e$ and $e'$ of $G_B$ are adjacent if and only if, in $R$, a vertex incident to $e$ is adjacent to one of $e'$). Then, $G_B$ has the following properties:

\begin{lemma}\label{lemma:GB-structure}
$G_B$ is a bipartite graph of maximum degree~$2$, thus a collection of disjoint paths and even cycles. Furthermore, assume $e_1e_2$ is an edge of $G_B$, where, in $R$, we have that $e_1=vv'$ is a bad edge of $R_1$ and $e_2=uu'$ is a bad edge of $R_2$, and, say, $uv$ is an edge of $(R_1,R_2)$. Then:

\begin{enumerate}
    \item if $e_2$ is a degree-$1$ vertex of $G_B$, then, in $R$, neighbours of $u'$ in $R_1$ are not incident to a bad edge;
    
    \item if $e_1$ is a degree-$1$ vertex of $G_B$, then, in $R$, either neighbours of $v'$ in $R_1$ are not incident to a bad edge, or $v'u\in R$ and $u$ is the sole neighbour of $v$ incident to a bad edge;
    
    \item If $e_1$ ($e_2$, respectively) is a degree-$2$ vertex of $G_B$, then, in $R$, we have that $v'u$ and $v'u'$ ($u'v$ and $u'v'$, respectively) are not edges; furthermore, if both $e_1$ and $e_2$ are degree-$2$ vertices of $G_B$, then, in $R$, we have that $uv$ is the only edge from which having $e_1e_2$ in $G_B$ originates.
\end{enumerate}
\end{lemma}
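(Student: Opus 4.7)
The plan is to derive all four claims purely from the structural reductions already performed in steps~12 and~13, together with the fourth item of Lemma~\ref{lemma:cut-max4}. First I would observe why $G_B$ has maximum degree~$2$: for any remaining bad edge $vv' \in R_1$, the assumption established at the end of step~12 guarantees that each of $v$ and $v'$ has at most one neighbour in $R_2$ lying in a bad edge, so $vv'$ has degree at most~$2$ in $G_B$; symmetrically, for any remaining bad edge $uu' \in R_2$, the fourth item of Lemma~\ref{lemma:cut-max4} gives that each of $u,u'$ has at most one neighbour in $R_1$ lying in a bad edge. Since $G_B$ is bipartite with $\Delta(G_B)\le 2$, it decomposes into disjoint paths and even cycles.

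For item~1, I would argue that if $e_2$ has degree~$1$ in $G_B$, then no bad edge of $R_1$ other than $e_1$ can touch $e_2$ through a neighbour, so any neighbour $w$ of $u'$ in $R_1$ incident to a bad edge must lie in $e_1$, that is, $w\in\{v,v'\}$. But step~13 has precisely eliminated the configuration where both endpoints of a bad edge of $R_2$ are adjacent to vertices of a common bad edge of $R_1$; since $u$ is already adjacent to $v\in e_1$, neither $u'v$ nor $u'v'$ can be an edge of $R$. Therefore $u'$ has no bad-edge neighbour in $R_1$ at all. For item~2, I would argue similarly: the degree-$1$ assumption on $e_1$ restricts the bad-edge neighbour of $v'$ (if any) to lie in $e_2$, step~12 forces this neighbour to be unique, and step~13 rules out $u'$ because $v$ is already adjacent to $u$. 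This leaves exactly the dichotomy in the statement: either $v'$ has no bad-edge neighbour in $R_2$, or its unique one is $u$, and in that case step~12 applied to $v$ says $u$ is also the sole bad-edge neighbour of $v$.

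For item~3, the two halves are symmetric and each reduces to exactly one of the earlier reductions. If $e_1$ has degree~$2$ in $G_B$, then $v'$ must have a bad-edge neighbour in some $e_3\ne e_2$; step~12 then forbids $v'$ from having a second bad-edge neighbour, so $v'u, v'u'\notin R$. If $e_2$ has degree~$2$, the fourth item of Lemma~\ref{lemma:cut-max4} applied to $u'$ gives the analogous conclusion $u'v, u'v'\notin R$. When both $e_1$ and $e_2$ have degree~$2$, combining the two one-sided conclusions shows that the only edge of $R$ between $\{v,v'\}$ and $\{u,u'\}$ is $uv$, which is exactly the ``furthermore'' claim.

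The main obstacle I anticipate is bookkeeping rather than any new combinatorial content: one has to be careful to invoke the right previous reduction in each sub-case (step~12 for $R_1$-side restrictions, the fourth item of Lemma~\ref{lemma:cut-max4} for $R_2$-side restrictions, and step~13 for cross restrictions tying both sides of a single pair $e_1,e_2$), and to verify that the reductions from step~12 and step~13 remain valid for \emph{all} bad edges still in $B$ at this stage, which they do since earlier steps only removed bad edges from $B$ without creating new ones.
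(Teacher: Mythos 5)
Your proposal is correct and follows essentially the same route as the paper's proof: $\Delta(G_B)\le 2$ from step~12 (for the $R_1$ side) and the fourth item of Lemma~\ref{lemma:cut-max4} (for the $R_2$ side), and the three items from the configurations eliminated in steps~12 and~13. The only quibble is in item~1, where you credit step~13 with excluding both $u'v$ and $u'v'$, whereas the exclusion of $u'v$ (i.e., $v$ adjacent to both endpoints of $e_2$) is really the configuration removed in step~12, step~13 handling only $v'u'$ --- a bookkeeping point of exactly the kind you flagged, with no effect on validity.
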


\begin{proof}
$G_B$ is clearly bipartite by definition. Also, by the fourth item of Lemma~\ref{lemma:cut-max4} and what we performed in steps~12 and~13, for any remaining bad edge of $B$, any of its two incident vertices is adjacent to at most one vertex incident to another remaining bad edge of $B$. Since bad edges are incident to two vertices, we get $\Delta(G_B) \leq 2$.

Regarding the first item, recall that $vu'$ cannot be an edge of $R$ after what we performed in step~12, and similarly $v'u'$ cannot be an edge after step~13. However, since $e_2$ has degree~$1$ in $G_B$, neighbours, in $R$, of $u'$ cannot be incident to any other bad edge of $R_1$.

Regarding the second item, recall that, after going through step~13, $v'u'$ cannot be an edge of $R$. Thus, either $v'u$ is not an edge of $R$ and then neighbours of $v'$ cannot be incident to bad edges in $R_2$, or $v'u$ is an edge of $R$ and then $u$ is the only neighbour of $v$ incident to a bad edge.

Regarding the third item, if $e_1$ is a degree-$2$ vertex of $G_B$, then $v'$, in $R$, is adjacent to another vertex incident to a bad edge of $R_2$. Again, after going through step~12, recall that $v'u$ and $v'u'$ cannot be edges of $R$. We reach the same conclusion regarding $e_2$ through the fourth item of Lemma~\ref{lemma:cut-max4}. The last part of the third item then follows as a corollary.
\end{proof}

We now resume with dealing with all remaining bad edges of $B$ according to their structure caught by $G_B$. In what follows, we assume $C$ is any connected component of $G_B$. Recall that $C$ is either an even cycle, or a (possibly empty) path.

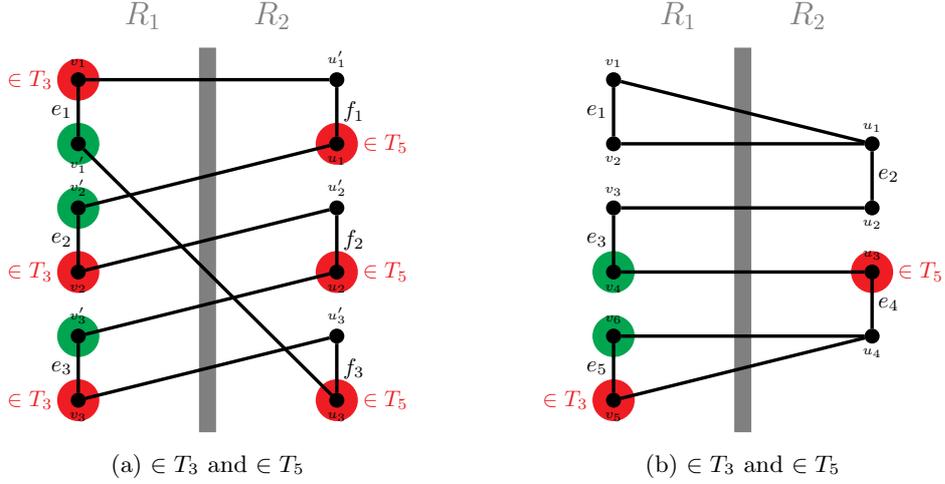
\begin{figure}[!t]
 	\centering
 	
    \subfloat[$\in T_3$ and $\in T_5$]{
    \scalebox{0.85}{
	\begin{tikzpicture}[inner sep=0.7mm]
	
	\node at (4,5){\Large\textcolor{gray}{$R_1$}};
	\node at (6,5){\Large\textcolor{gray}{$R_2$}};
	
	\draw[Red, line width=1.5pt,opacity=0.4, fill] (3,4) circle (0.3cm);	
	\node at (2.25,4){\small\textcolor{Red}{$\in T_3$}};
	\node[draw,circle,line width=1pt,fill=black](v1) at (3,4)[label=above:\tiny $v_1$]{};
	\draw[Green, line width=1.5pt,opacity=0.4, fill] (3,3) circle (0.3cm);	
	\node[draw,circle,line width=1pt,fill=black](v1p) at (3,3)[label=below:\tiny $v_1'$]{};
	\draw[line width=1.5pt,draw,black] (v1) to node[left]{\small $e_1$} (v1p);
	
	\node[draw,circle,line width=1pt,fill=black](u1p) at (7,4)[label=above:\tiny $u_1'$]{};
	\draw[Red, line width=1.5pt,opacity=0.4, fill] (7,3) circle (0.3cm);	
	\node at (7.75,3){\small\textcolor{Red}{$\in T_5$}};
	\node[draw,circle,line width=1pt,fill=black](u1) at (7,3)[label=below:\tiny $u_1$]{};
	\draw[line width=1.5pt,draw,black] (u1p) to node[right]{\small $f_1$} (u1);
	
	\draw[Red, line width=1.5pt,opacity=0.4, fill] (3,1) circle (0.3cm);	
	\node at (2.25,1){\small\textcolor{Red}{$\in T_3$}};
	\node[draw,circle,line width=1pt,fill=black](v2) at (3,1)[label=below:\tiny $v_2$]{};
	\draw[Green, line width=1.5pt,opacity=0.4, fill] (3,2) circle (0.3cm);	
	\node[draw,circle,line width=1pt,fill=black](v2p) at (3,2)[label=above:\tiny $v_2'$]{};
	\draw[line width=1.5pt,draw,black] (v2p) to node[left]{\small $e_2$} (v2);
	
	\draw[Red, line width=1.5pt,opacity=0.4, fill] (7,1) circle (0.3cm);	
	\node at (7.75,1){\small\textcolor{Red}{$\in T_5$}};
	\node[draw,circle,line width=1pt,fill=black](u2) at (7,1)[label=below:\tiny $u_2$]{};
	\node[draw,circle,line width=1pt,fill=black](u2p) at (7,2)[label=above:\tiny $u_2'$]{};
	\draw[line width=1.5pt,draw,black] (u2p) to node[right]{\small $f_2$} (u2);
	
	\draw[Red, line width=1.5pt,opacity=0.4, fill] (3,-1) circle (0.3cm);	
	\node at (2.25,-1){\small\textcolor{Red}{$\in T_3$}};
	\node[draw,circle,line width=1pt,fill=black](v3) at (3,-1)[label=below:\tiny $v_3$]{};
	\draw[Green, line width=1.5pt,opacity=0.4, fill] (3,0) circle (0.3cm);	
	\node[draw,circle,line width=1pt,fill=black](v3p) at (3,0)[label=above:\tiny $v_3'$]{};
	\draw[line width=1.5pt,draw,black] (v3p) to node[left]{\small $e_3$} (v3);
	
	\draw[Red, line width=1.5pt,opacity=0.4, fill] (7,-1) circle (0.3cm);	
	\node at (7.75,-1){\small\textcolor{Red}{$\in T_5$}};
	\node[draw,circle,line width=1pt,fill=black](u3) at (7,-1)[label=below:\tiny $u_3$]{};
	\node[draw,circle,line width=1pt,fill=black](u3p) at (7,0)[label=above:\tiny $u_3'$]{};
	\draw[line width=1.5pt,draw,black] (u3p) to node[right]{\small $f_3$} (u3);
	
	\draw[line width=7.5pt,draw,gray] (5,4.5) -- (5,-1.5);
	
	\draw[line width=1.5pt,draw,black] (v1) -- (u1p);
	\draw[line width=1.5pt,draw,black] (u1) -- (v2p);
	\draw[line width=1.5pt,draw,black] (v2) -- (u2p);
	\draw[line width=1.5pt,draw,black] (u2) -- (v3p);
	\draw[line width=1.5pt,draw,black] (v3) -- (u3p);
	\draw[line width=1.5pt,draw,black] (u3) -- (v1p);
	
	\end{tikzpicture}
    }
    }
    \hspace{30pt}
    \subfloat[$\in T_3$ and $\in T_5$]{
    \scalebox{0.85}{
	\begin{tikzpicture}[inner sep=0.7mm]
	
	\node at (4,5){\Large\textcolor{gray}{$R_1$}};
	\node at (6,5){\Large\textcolor{gray}{$R_2$}};
	
	\node[draw,circle,line width=1pt,fill=black](v1) at (3,4)[label=above:\tiny $v_1$]{};
	\node[draw,circle,line width=1pt,fill=black](v1p) at (3,3)[label=below:\tiny $v_2$]{};
	\draw[line width=1.5pt,draw,black] (v1) to node[left]{\small $e_1$} (v1p);
	
	\node[draw,circle,line width=1pt,fill=black](u1p) at (7,3)[label=above:\tiny $u_1$]{};
	\node[draw,circle,line width=1pt,fill=black](u1) at (7,2)[label=below:\tiny $u_2$]{};
	\draw[line width=1.5pt,draw,black] (u1p) to node[right]{\small $e_2$} (u1);
	
	\draw[Green, line width=1.5pt,opacity=0.4, fill] (3,1) circle (0.3cm);	
	\node[draw,circle,line width=1pt,fill=black](v2) at (3,1)[label=below:\tiny $v_4$]{};
	\node[draw,circle,line width=1pt,fill=black](v2p) at (3,2)[label=above:\tiny $v_3$]{};
	\draw[line width=1.5pt,draw,black] (v2p) to node[left]{\small $e_3$} (v2);
	
	\node[draw,circle,line width=1pt,fill=black](u2) at (7,0)[label=below:\tiny $u_4$]{};
	\draw[Red, line width=1.5pt,opacity=0.4, fill] (7,1) circle (0.3cm);	
	\node at (7.75,1){\small\textcolor{Red}{$\in T_5$}};
	\node[draw,circle,line width=1pt,fill=black](u2p) at (7,1)[label=above:\tiny $u_3$]{};
	\draw[line width=1.5pt,draw,black] (u2p) to node[right]{\small $e_4$} (u2);
	
	\draw[Red, line width=1.5pt,opacity=0.4, fill] (3,-1) circle (0.3cm);	
	\node at (2.25,-1){\small\textcolor{Red}{$\in T_3$}};
	\node[draw,circle,line width=1pt,fill=black](v3) at (3,-1)[label=below:\tiny $v_5$]{};
	\draw[Green, line width=1.5pt,opacity=0.4, fill] (3,0) circle (0.3cm);	
	\node[draw,circle,line width=1pt,fill=black](v3p) at (3,0)[label=above:\tiny $v_6$]{};
	\draw[line width=1.5pt,draw,black] (v3p) to node[left]{\small $e_5$} (v3);
	
	\draw[line width=7.5pt,draw,gray] (5,4.5) -- (5,-1.5);
	
	\draw[line width=1.5pt,draw,black] (v1) -- (u1p);
	\draw[line width=1.5pt,draw,black] (v1p) -- (u1p);
	\draw[line width=1.5pt,draw,black] (u1) -- (v2p);
	\draw[line width=1.5pt,draw,black] (v2) -- (u2p);
	\draw[line width=1.5pt,draw,black] (u2) -- (v3p);
	\draw[line width=1.5pt,draw,black] (v3) -- (u2);
	
	\end{tikzpicture}
    }
    }
    
\caption{Vertices of $T_3$ and $T_5$ in the proof of Theorem~\ref{theorem:max-degree-6}.
Vertices highlighted in red are faulty vertices.
Vertices highlighted in green are part of $I$, and thus incident to an edge of $M$. 
\label{figure:max-deg6-types-2}}
\end{figure}

\begin{enumerate}
    \item[14.] Assume $C=e_1f_1e_2f_2 \dots e_{2k}f_{2k}e_1$ is an even cycle (where we regard $e_{2k+1}$ and $f_{2k+1}$ as $e_1$ and $f_1$, and we regard $e_0$ and $f_0$ as $e_{2k}$ and $f_{2k}$), where, say, $e_i$'s correspond to bad edges of $R_1$ and $f_i$'s correspond to bad edges of $R_2$. For every $i \in \{1,\dots,2k\}$, let $v_i$ be, in $R$, the unique vertex incident to a vertex incident to $f_i$, and $u_i$ be the unique vertex incident to $f_i$ adjacent to a vertex incident to $e_{i+1}$. Note that, for any $i,j \in \{1,\dots,2k\}$, it cannot be that $v_i$ and $u_j$ are adjacent in $R$. For any $e_i$ let also $v_i'$ denote the second incident vertex in $R$, and similarly for any $f_i$ let $u_i'$ denote the second incident vertex. We here add all $v_i$'s to $T_3$ and all $u_i$'s to $T_5$, as well as all $v_i'$'s to $I$ (see Figure~\ref{figure:max-deg6-types-2}(a)). Taking into account edges of $M$ incident to vertices of $I$, which eventually will be assigned label~$2$, for any $i \in \{1,\dots,2k\}$ we obtain $\pi(v_i')=2^5$, $\pi(u_i)=\pi(v_i)=2^4$, and $\pi(u_i')=2^3$.
    
    Since all $v_i'$'s and all $u_i'$'s have reached intended products, they cannot be involved in conflicts. Furthermore, after treating steps~10 and~11, all $u_i$'s have their neighbours either having an intended product, or being in $T_2$ (and will be treated later on). So, eventually $u_i$'s as well will not be involved in conflicts. We thus only need to care, for any $i \in \{1,\dots,2k\}$, about the other neighbour (different from $u_{i-1}'$), denote it $w_i$, in $R_2$ of $v_i$. The only situation in which $w_i$ does not have an intended product is when $w_i \in T_4$. However $v_iw_i$ is flexible w.r.t.~$v_i$ since we can here change to~$1$ the labels assigned to $v_iw_i$ and to $v_i$. As a result, we obtain $\pi(v_i)=2^2$, thus getting rid of the conflict between $v_i$ and $w_i$. So we add $v_iw_i$ to $E_4^2$, so that $v_iw_i$ will be used later on to eventually deal with vertices of $T_4$. Note here that any vertex incident to a flexible edge in $E_4^2$ will have final product $2^2$ if the label of that edge gets modified, and $2^4$ otherwise. Let us emphasise that, for every vertex $w \in T_4$ and any edge $h=v_hw \in E_4^2$, it is possible to change to~$1$ the label assigned to~$h$ since it is flexible w.r.t.~$v_h$; as a result, we would get $\pi(v_h)=2^2$ (check edges we have added to $E_4^2$ in step~13). Moreover, regardless of whether the label of $h$ is modified or not, we will always get $\pi(v_h)\neq2^3$; this will be a crucial point later on.
    
    \item[15.] Assume $C=e_1e_2 \dots e_k$ is a path. For every $i \in \{1,\dots,k\}$, set $e_i=v_iv_i'$. Further assume $v_1$ is a vertex incident to $e_1$ that has no neighbour incident to a bad edge in the other part of $(R_1,R_2)$. For every $i \in \{2,\dots,k-1\}$, let also $v_i$ be the vertex incident to $e_i$ not incident to any vertex of $e_{i+1}$ (recall Lemma~\ref{lemma:GB-structure}). Assume also that $v_{k-1}v_k'$ is an edge of $R$ from which $e_{k-1}e_k$ in $G_B$ originates. Then $v_1v_2',v_2v_3',\dots,v_{k-1}v_k'$ are edges of $R$, and $\bigcup_{i=1}^k \{v_i\}$ is an independent set by our choice of the $v_i$'s.
    
    For every $e_i$ that belongs to $R_1$, we add $v_i'$ to $I$, and change to $2$ the labels assigned to $v_i$ and $v_i'$. As a result, taking into account that edges of $M$ will eventually be assigned label~$2$, we get $\pi(v_i)=2^4$ and  $\pi(v_i')=2^5$. Now, for every $e_i$ in $R_2$, we keep to~$1$ the label assigned to~$v_i$. In that case, $\pi(v_i)=2^4$ and $\pi(v_i')=2^3$.
    
    We thus have that, for all $i \in \{1,\dots,k\}$, vertex $v_i'$ has reached an intended product and thus cannot be involved in conflicts. For every $e_i$ in $R_2$, by steps~10 and~11 all neighbours of $v_i$ either have intended products, or belong to $T_2$ and will thus be treated later on. Now, for all $i \in \{2,\dots,k\}$ such that $e_i$ lies in $R_1$, let us denote by $w_i$ the neighbour of $v_i$ in $R_2$ different from $v_{i-1}'$. Again, the only situation in which $w_i$ does not have an intended product is when $w_i \in T_4$. However, $v_iw_i$ is not flexible w.r.t.~$v_i$. We can indeed change to~$1$ the labels assigned to~$v_iw_i$ and~$v_i$, which yields $\pi(v_i)=2^2$. So, as in the previous step, we add $v_iw_i$ to $E_4^2$, so that $v_iw_i$ can be used later on to deal with vertices of $T_4$.
    
    Last, consider $v_1$ and the case where $e_1$ lies in $R_1$. Denote by $w_1$ and $w_2$ the two neighbours of $v_1$ in $R_2$. If neither $w_1$ nor $w_2$ belongs to $T_4$, or if only one of them does, then we reach the same conclusions for $v_1$ as for the other $v_i$'s. Otherwise, if both $w_1$ and $w_2$ belong to $T_4$, then note that $v_1w_1$ and $v_1w_2$ together are flexible w.r.t.~$v_1$ since if we change to~$1$ the labels assigned to $v_1w_1$ and $v_1w_2$ we get $\pi(v_1)=2^2$, and if we change to~$1$ the labels assigned to one of $v_1w_1$ or $v_1w_2$ and then change to~$1$ the label assigned to $v_1$ we also get $\pi(v_1)=2^2$, both yielding not conflicts. To conclude, regardless of whether zero, one, or two flexible edges incident to $v_1$ get their labels modified later on, we will always have $\pi(v_1)\neq 2^3$. Thus, we add the pair of edges $\{v_1w_1,v_1w_2\}$ to $E_4^2$ to help dealing with vertices of $T_4$ later on (keep in mind, then, that $E_4^2$ contains both edges and pairs of edges).
\end{enumerate}

Before we continue, let us introduce a bit more terminology. A degree-$1$ vertex $e$ of $G_B$ is \textbf{good} if there is, in $R$, a vertex $v$ incident to $e$ such that all neighbours of $v$ across $(R_1,R_2)$ are not incident to a bad edge. Note that the arguments given to deal with step~15 mean we can deal with connected components of $G_B$ being paths with a good end-vertex. Note also that, by the first item of Lemma~\ref{lemma:GB-structure}, if $e$ is a degree-$1$ vertex of $G_B$ such that $e$ lies, in $R$, in $R_2$, then $e$ is good in $G_B$. So, regarding connected components of $G_B$ that are paths, it remains to consider those where both end-vertices correspond, in $R$, to bad edges lying in $R_1$. In particular, these remaining paths are odd paths of order at least~$3$.

\begin{enumerate}
    \item[16.] Assume $G_B$ contains a component $C$ being a path $e_1e_2e_3$ of order~$3$, where, thus, $e_1$ and $e_3$ are not good. As mentioned above, in $R$, both $e_1$ and $e_3$ belong to $R_1$. W.l.o.g., set $e_1=v_1v_2$, $e_3=v_3v_4$, and $e_2=u_1u_2$, where the fact that $e_2$ is adjacent to both $e_1$ and $e_3$ originates from the edges $v_2u_1$ and $u_2v_3$ in $R$. We can also assume that $v_1u_1$ and $u_2v_4$ are edges of $R$ since $e_1$ and $e_3$ are not good in $G_B$.
    
    Let us first focus on $u_2$, $v_4$, and $v_3$. We add $v_4$ to $I$, change to $1$ the label assigned to $u_2v_3$, and change to $2$ the labels assigned to $u_2$, $v_4$, and $u_1u_2$. If necessary, we also change to~$2$ the label assigned to $v_3$ so that $\pi(v_3) \in \{2^2,2^3\}$ and $v_3$ is not in conflict with its neighbour different from $u_2$ and $v_4$. As a result, we get $\pi(u_2)=2^4$, $\pi(v_4)=2^5$, and $\pi(v_3) \in \{2^2,2^3\}$. We claim that there no conflicts between $u_2$, $v_3$, and $v_4$; this is because $v_4$ has an intended product, while neighbours of $u_2$ either have intended products or belong to $T_2$ (and will thus be treated later on).
    
    Let us now denote by $w_1$ the neighbour of $v_1$ in $R_2$ different from $u_1$. We consider two cases, depending on whether $w_1$ lies in $T_4$ or not.
    
    \begin{itemize}
        \item Assume first $w_1 \in T_4$. In this case, we add $v_2$ to $I$, change to $2$ the label assigned to $v_2$, and change to $1$ the label assigned to $v_1u_2$. Then, taking into account that edges of $M$ will eventually be assigned label~$2$, we obtain $\pi(v_2)=2^5$, $\pi(u_1)=2^3$, and $\pi(v_1)=2^2$. So $v_2$ and $u_1$ get intended products, and they cannot be involved in conflicts with their neighbours. Regarding $v_1$, note that $v_1w_1$ is flexible w.r.t.~$v_1$, as we can change to~$1$ the label assigned to $v_1w_1$ and change to $2$ the label assigned to $v_1$, to obtain $\pi(v_1)=2^2$. So, again, we add $v_1w_1$ to $E_4^2$.
        
        \item Assume now $w_1 \not \in T_4$. Here, we add $v_1$ to $I$, change to $2$ the label assigned to $v_1$, and change to~$1$ the labels assigned to $v_1u_1$ and $v_2u_1$. Then, taking into account that edges of $M$ will eventually be assigned label~$2$, we get $\pi(v_1)=2^4$ and $v_1$ cannot be involved in conflicts. Now denote by $w_2$ the neighbour of $v_2$ different from $v_1$ and $u_1$. If necessary, we change to~$2$ the label assigned to $v_2$ so that $\pi(v_2) \in \{2^2,2^3\}$ and $v_2$ is not in conflict with $w_2$. Last, if necessary, we change to $2$ the label assigned to $u_1$ so that $u_1$ and $v_2$ are not in conflict. As a result, none of $v_2$ and $u_1$ is in conflict with its neighbours.
        
        Let us here emphasise that, although $\pi(u_1)$ depends on $\pi(v_2)$, and $\pi(v_2)$ depends on $\pi(w_2)$, these dependencies do not cycle, as the only neighbour of $u_1$ not in $\{u_2,v_1,v_2\}$ is not incident to a bad edge of $R_1$.
    \end{itemize}
    
    \item[17.] It remains to consider when $G_B$ has a component $C$ being a path $C=e_1e_2e_3\cdots e_{2k+1}$ of odd order more than~$3$.
    We set $e_1=v_1v_2$; for every $i \in \{1,\dots,k\}$, we set $e_{2_i+1} = v_{2i+1}v_{2i+2}$ and $e_{2i}=u_{2i-1}u_{2_i}$; and we let $v_{2i}u_{2i-1}$ and $u_{2i}v_{2i+1}$ be edges of $R$. In particular, $v_1u_3$ and $v_{2k+2}u_{2k}$ are also edges of $R$ since $e_1$ and $e_{2k+1}$ are not good in $G_B$.
    
    We first deal with the first three edges of $C$ the same way as in step 16. The only difference here is that $u_2v_4$ is not an edge in the current case. However, in step~16, we assigned label~$2$ to $u_2v_4$, which, here, can be replaced by assigning label~$2$ to the edge incident to $u_2$ going to its last neighbour, while, regarding~$2$, this can be replaced by assigning label~$2$ to $v_4u_3$. Note, in particular, that we obtain $\pi(v_4)=2^5$, and thus $v_4$ reaches an intended product. 
    
    Other vertices of $C$ can be dealt with in the following way. For every $i \in \{1,\dots,k-1\}$, we add $v_{2i+3}$ to $T_3$ and $v_{2i+4}$ to $I$ (see Figure~\ref{figure:max-deg6-types-2}(b)), and change to~$2$ the labels assigned to $v_{2i+3}$ and $v_{2i+4}$; as a result (taking into account edges of $M$), $\pi(v_{2i+3})=2^4$, and $\pi(v_{2i+4})=2^5$. Before moving on, we denote the only other neighbour (different from $u_{2i+2}$) in $R_2$ of $v_{2i+3}$ by $w_{2i+3}$. Then, for every $i \in \{1,\dots,k-1\}$, we add $u_{2i+1}$ to $T_5$, and change to~$2$ the label assigned to $u_{2i+1}$; as a result (taking into account edges of $M$), $\pi(u_{2i+1})=2^4$ and $\pi(u_{2i+2})=2^3$. Note that, again, $T_3 \cup T_5$ is an independent set and all its vertices have product $2^4$. For the same reasons as in step 14, vertices of $T_3 \cup T_5$ are not involved in conflicts with their neighbours, with the only exception when $w_{2i+3}$, where $i \in \{1,\dots,k-1\}$, does not have an intended product, \textit{i.e.}, $w_{2i+3} \in T_4$. In this case we have that $v_{2i+3}w_{2i+3}$ is flexible w.r.t.~$v_{2i+3}$ since we can again change to~$1$ the labels assigned to $v_{2i+3}w_{2i+3}$ and to $v_{2i+3}$ and obtain $\pi(v_{2i+3})=2^2$. So we add $v_{2i+3}w_{2i+3}$ to $E_4^2$ exactly as we did in step 14. Meanwhile, for all $i \in \{1,\dots,k-1\}$, vertices $v_{2i+4}$ and $u_{2i+2}$ have reached intended products, so they are not involved in conflicts as well.
\end{enumerate}

This handles vertices incident to all bad edges. Remaining conflicts, if any, involve vertices of $T_4$;
we get rid of these as follows. Set $E_4=E_4^1 \cup E_4^2$.
Before moving on, let us recall properties of edges in $E_4$. For every edge $e\in E_4$ and vertex $u_e\in T_4$ incident to~$e$, denote by $v_e$ the other vertex incident to $e$; then $e$ is flexible w.r.t. $v_e$. Moreover, assuming $e\in E_4^1$ (or $e\in E_4^2$), then $\pi(v_e)=2^4$ (respectively, $\pi(v_e)\neq 2^3$) if the label of $e$ is not modified in step~18, while $\pi(v_e)=2^3$ (respectively, $\pi(v_e)=2^2$) otherwise.

\begin{enumerate}
    \item[18.] Let $u$ be any vertex of $T_4$. The crucial point here, which can be checked throughout earlier steps, is that for every neighbour $v$ of $u$ without intended product, we have $vu \in E_4$. In what follows, we consider cases depending on the type and the number of edges in $E_4$ incident to $u$.
    
    \begin{itemize}
        \item First off, if $u$ is not incident to any edge of $E_4$, then no neighbour of $u$ has product $2^4$. It then suffices, here, to keep labels as is so that $\pi(u)=2^4$ and is not involved in conflicts.
        
        \item Assume now $u$ is incident to an edge $e=v_eu$ of $E_4^2$. By definition, $e$ is flexible w.r.t.~$v_e$. We change to~$1$ the label assigned to~$e$ and perform additional label modifications (mentioned when adding $e$ to $E_4^2$) so that $\pi(v_e)=2^2$. This results in $\pi(u)=2^3$, an intended product. Moreover, neighbours of $y$ incident to unmodified edges in $E_4$ have product $2^4$ or $2^2$. So $u$ is not involved in conflicts.
        
        \item Assume last $u$ is incident to edges of $E_4$, none of which lies in $E_4^2$. Thus, all these edges lie in $E_4^1$. 
        
        \begin{itemize}
            \item If $u$ is incident to exactly one edge $v_eu$ of $E_4^1$, then we change to $1$ the label assigned to~$1$, and perform additional changes mentioned when adding $e$ to $E_4^1$, so that $\pi(v_e)=2^3$. Additionally we change to $2$ the label assigned to $u$ so that we preserve $\pi(u)=2^4$, while $u$ no longer has neighbours with product $2^4$.
            
            \item Assume last $u$ is incident to at least two edge $e_1=v_1u$ and $e_2=v_2u$ belonging to $E_4^1$. We here change to~$1$ the labels assigned $e_1$ and $e_2$, as well as the corresponding additional changes, so that we preserve $\pi(v_1)=\pi(v_2)=2^3$. On the other hand, we obtain $\pi(u)=2^2$, an intended product. Moreover, neighbours of $u$ incident to unmodified edges in $E_4^1$ have product $2^4$. So $u$ is no longer involved in conflicts.
        \end{itemize}
    \end{itemize}
\end{enumerate}

\begin{table}[t!]
\centering
\begin{tabular}{|>{\centering\arraybackslash}m{3.2cm}
                |>{\centering\arraybackslash}m{3.2cm}
                |>{\centering\arraybackslash}m{2.5cm}
               |>{\centering\arraybackslash}m{3.2cm}|}
\hline
\diagbox{$R_1$}{$R_2$} & \textbf{\makecell{\small Vertices with\\\small intended product}} & \textbf{T\textsubscript{4}} & \textbf{T\textsubscript{5}} \\
\hline
\textbf{\makecell{\small Vertices with\\\small intended product}} & solved naturally & step 18 & solved naturally \\
\hline
\textbf{T\textsubscript{1}} & \multicolumn{3}{c|}{step 10} \\
\hline
\textbf{T\textsubscript{2}} & \multicolumn{3}{c|}{step 11} \\
\hline
\textbf{T\textsubscript{3}} & solved naturally & step 18 & steps 12-17 \\
\hline
\end{tabular}
\caption{Summary of why adjacent vertices with intended products and/or certain types cannot be in conflict, by the total $2$-labelling designed in the proof of Theorem~\ref{theorem:max-degree-6}.}
\label{Mapping}
\end{table}

To get the desired product-proper total $2$-labelling of $G$, it then remains to perform the final adjustments mentioned throughout the proof. In particular, for every edge of $M$ we change the label to~$2$, while for all vertices of $Y$ not incident to an edge of $M$ we change the label to~$2$. Then, for every isolated vertex $v$ in $R_1$ (mentioned in step 9) with a unique neighbour in $R_2$, we change the label assigned to $v$ so that $\pi(v)\in\{2^2,2^3\}$ and this product differs from that of the unique neighbour of $v$ in $R_2$. Finally, for every vertex $v$ in $T_2$ (mentioned in step 11), we change the labels assigned to $v$ and its flexible incident edge so that $\pi(v)\in\{2^2,2^3,2^4\}$ and this product differs from that of its neighbours in $R_2$. 

Eventually, products of vertices in $X$ are $2^0$, products of vertices in $Y$ are $2^1$, products of vertices in $R_1$ with intended products are $2^5$ or $2^6$, and products of vertices in $R_2$ with intended products are $2^2$ and $2^3$. As for certain types of vertices (\textit{i.e.}, in $T_1,\dots,T_5$), vertices in $T_1$ and $T_2$ have product $2^2$, $2^3$, or $2^4$ (steps 10 and 11), vertices in $T_4$ have product $2^2$, $2^3$, or $2^4$ (step 18), vertices in $T_3$ or $T_5$ have product $2^4$ (steps 14-17). The resolution steps of conflicts between vertices of $R$ having intended products or being of certain types is summarised again in Table~\ref{Mapping}. Finally, recall that there are also some vertices that neither reached an intended product nor are of a certain type, namely isolated vertices of $R_1$ with degree~$1$ in $R$ (treated in step~9) and vertices incident to bad edges with certain structures (treated through steps~12, 13, 16, and 17). However, due to how these vertices have been treated, it can be checked they cannot be involved in conflicts. All in all, the resulting total $2$-labelling of $G$ is thus product-proper.

\section{Proof of Theorem~\ref{theorem:mad3}}\label{section:mad3}

The proof is through the so-called discharging method.
Assume the claim is wrong, and let $G$ be a counterexample to the claim that is minimum in terms of number of elements (vertices and edges).
That is, $\mad(G) \leq 3$, there is no multiset-proper total $2$-labelling of $G$, 
and every graph of maximum average degree at most~$3$ smaller than $G$ admits such total $2$-labellings.
In particular, by minimality we can assume $G$ is connected.
Our goal in what follows is to first prove that $G$, because it is a minimum counterexample to the claim,
cannot contain certain (sparse) configurations. 
Eventually, we will get a contradiction to the fact that $\mad(G) \leq 3$ from the fact that $G$ cannot contain these sparse configurations.

In the context of total $2$-labellings and multisets, we can equivalently assign colours, say, red and blue,
and consider that any two vertices are distinguished whenever their are not incident to the number of red elements or blue elements.
We denote by $\mu(v)$ the multisets of colours for any vertex $v$.
Also, in what follows, for any $k \geq 1$, a \textit{$k$-vertex} is a vertex of degree~$k$,
a \textit{$k^-$}-vertex is a vertex of degree at most~$k$,
and a \textit{$k^+$}-vertex is a vertex of degree at least~$k$.
Likewise, if a vertex $u$ is adjacent to a $k$-vertex $v$, then we say that $v$ is a \textit{$k$-neighbour} of $u$.
We derive the notions of \textit{$k^-$-} and \textit{$k^+$-neighbour} in the obvious way.

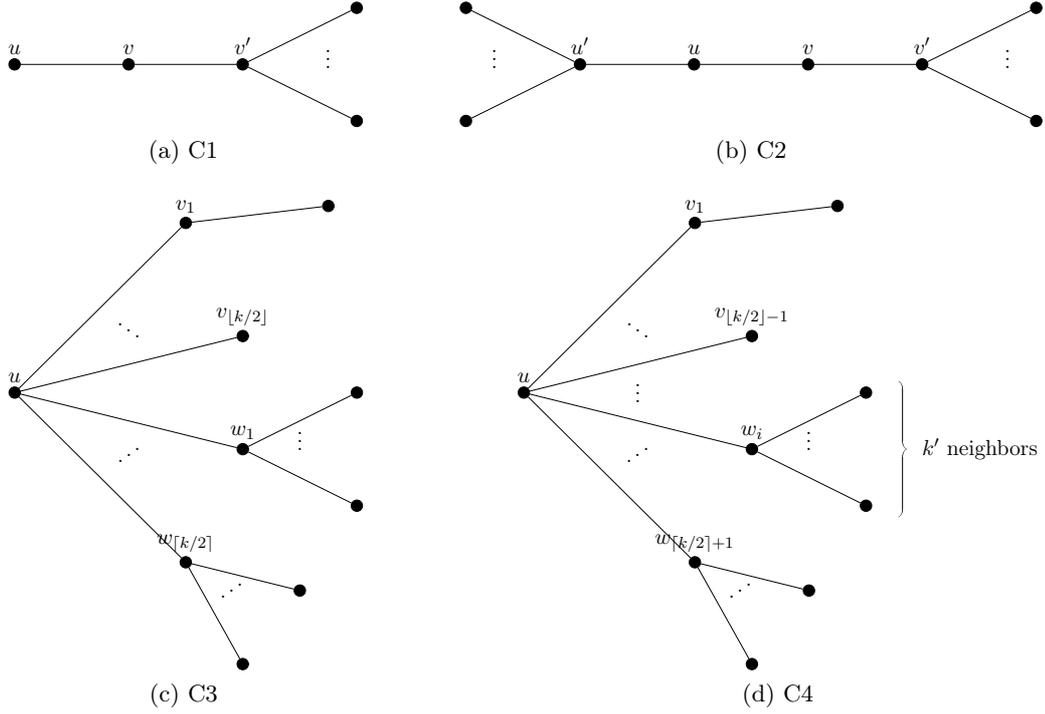
\begin{figure}[!t]
 	\centering
 	
    \subfloat[C1]{
    \scalebox{0.75}{
	\begin{tikzpicture}[inner sep=0.7mm]
	\tikzset{vertex/.style={circle, minimum size=0.2cm, fill=black, draw, inner sep=1pt}};    
    \node[vertex, label=$u$] (1) at (0,0) {};
    \node[vertex, label=$v$] (2) at (2,0) {};
    \node[vertex, label=$v'$] (3) at (4,0) {};
    \node[vertex] (4) at (6,-1) {};
    \node[vertex] (5) at (6,1) {};
    \node (dots) at (5.5,0.2){$\vdots$};

    \draw[](1)--(2)--(3)--(4);
    \draw[](3)--(5);
	\end{tikzpicture}
    }
    }
    \hspace{20pt}
    \subfloat[C2]{
    \scalebox{0.75}{
	\begin{tikzpicture}[inner sep=0.7mm]
    \tikzset{vertex/.style={circle, minimum size=0.2cm, fill=black, draw, inner sep=1pt}};    

  \node[vertex, label=$u$] (1) at (0,0) {};
  \node[vertex, label=$v$] (2) at (2,0) {};
  \node[vertex, label=$v'$] (3) at (4,0) {};
  \node[vertex] (4) at (6,-1) {};
  \node[vertex] (5) at (6,1) {};
  \node[vertex,label=$u'$] (6) at (-2,0) {};
  \node[vertex] (7) at (-4,1) {};
  \node[vertex] (8) at (-4,-1) {};
  \node (dots) at (5.5,0.2){$\vdots$};
  \node (dots) at (-3.5,0.2){$\vdots$};

  \draw[](7)--(6)--(1)--(2)--(3)--(4);
  \draw[](3)--(5);
  \draw[](8)--(6);
  
	\end{tikzpicture}
    }
    }
    
    \subfloat[C3]{
    \scalebox{0.75}{
	\begin{tikzpicture}[inner sep=0.7mm]
    \tikzset{vertex/.style={circle, minimum size=0.2cm, fill=black, draw, inner sep=1pt}};    

  \node[vertex, label=$u$] (1) at (1,1) {};
  \node[vertex, label=$v_1$] (2) at (4,4) {};
  \node[vertex, label=$v_{\lfloor k/2\rfloor}$] (3) at (5,2) {};
  \node[vertex, label=$w_1$] (4) at (5,0) {};
  \node[vertex, label=$w_{\lceil k/2\rceil}$] (5) at (4,-2) {};
  \node (dots) at (3,0){$\reflectbox{$\ddots$}$};
  \node (dots) at (3,2.2){$\ddots$};
  \node[vertex] (6) at (6.5,4.3) {};
  \node[vertex] (7) at (7,1) {};
  \node[vertex] (8) at (7,-1) {};
  \node[vertex] (9) at (6,-2.5) {};
  \node[vertex] (10) at (5,-3.8) {};
  \node (dots) at (4.8,-2.4){$\reflectbox{$\ddots$}$};
  \node (dots) at (6,0.25){$\vdots$};

  \draw[](3)--(1)--(2);
  \draw[](4)--(1)--(5);
  \draw[](10)--(5)--(9);
  \draw[](7)--(4)--(8);
  \draw[](6)--(2);
	\end{tikzpicture}
    }
    }
    \hspace{40pt}
    \subfloat[C4]{
    \scalebox{0.75}{
	\begin{tikzpicture}[inner sep=0.7mm]
    \tikzset{vertex/.style={circle, minimum size=0.2cm, fill=black, draw, inner sep=1pt}};    

  \node[vertex, label=$u$] (1) at (1,1) {};
  \node[vertex, label=$v_1$] (2) at (4,4) {};
  \node[vertex, label=$v_{\lfloor k/2\rfloor-1}$] (3) at (5,2) {};
  \node[vertex, label=$w_i$] (4) at (5,0) {};
  \node[vertex, label=$w_{\lceil k/2\rceil+1}$] (5) at (4,-2) {};
  \node (dots) at (3,0){$\reflectbox{$\ddots$}$};
  \node (dots) at (3,2.2){$\ddots$};
  \node[vertex] (6) at (6.5,4.3) {};
  \node[vertex] (7) at (7,1) {};
  \node[vertex] (8) at (7,-1) {};
  \node[vertex] (9) at (6,-2.5) {};
  \node[vertex] (10) at (5,-3.8) {};
  \node (dots) at (4.8,-2.4){$\reflectbox{$\ddots$}$};
  \node (dots) at (6,0.25){$\vdots$};
  \node (dots) at (3,1.1){$\vdots$};
  \node (dots) at (9,0){$k'$ neighbors};

  \draw[](3)--(1)--(2);
  \draw[](4)--(1)--(5);
  \draw[](10)--(5)--(9);
  \draw[](7)--(4)--(8);
  \draw[](6)--(2);
  \draw [decorate, decoration = {calligraphic brace, raise = 2pt, amplitude = 4pt}] (7.5,1.2) --  (7.5,-1.2);
	\end{tikzpicture}
    }
    }
    
\caption{Reducible configurations in the proof of Theorem~\ref{theorem:mad3}
\label{figure:reducible-configs}}
\end{figure}

\begin{claim}\label{claim:configurations}
$G$ cannot contain any of the following configurations (see Figure~\ref{figure:reducible-configs}):
\begin{itemize}
	\item[(C1)] a $2$-vertex with a $1$-neighbour;
	\item[(C2)] two adjacent $2$-vertices;
	\item[(C3)] for any $k \geq 3$, a $k$-vertex with $\lfloor k/2 \rfloor$ $2^-$-neighbours.
\end{itemize}
To describe the rest of the configurations, we need to define, for any $k \geq 4$,
a \textit{weak $k$-vertex} as a vertex with exactly $(\lfloor k/2 \rfloor -1)$ $2^-$-neighbours.
A $k$-vertex that is not weak is said \textit{strong}.
Then, $G$ also cannot contain:

\begin{itemize}
	\item[(C4)] for any even $k \geq 4$, a weak $k$-vertex with a $k'$-neighbour with $k' \not \in \{1,2,k\}$.
\end{itemize}
\end{claim}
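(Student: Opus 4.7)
The strategy is the classical reducibility argument: for each configuration (C1)--(C4), I assume $G$ contains it, then delete a bounded set of vertices (and their incident edges) to obtain a proper subgraph $G'$. Since $\mad$ is monotone under taking subgraphs, we have $\mad(G') \leq \mad(G) \leq 3$, and since $G'$ is strictly smaller than $G$, the minimality of $G$ yields a multiset-proper total $2$-labelling $\ell$ of $G'$. It then remains to extend $\ell$ to $G$ by choosing labels for the removed elements, which contradicts the assumption that $G$ admits no such labelling. The key enabling observation throughout is that two adjacent vertices whose incident multisets have different \emph{cardinalities} are automatically distinguished, so I only need to worry about conflicts between adjacent vertices of the same degree.

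For (C1), I would delete the $1$-vertex $u$; then $|\mu_G(u)|=2\neq 3=|\mu_G(v)|$ rules out any conflict between $u$ and $v$, and the only sensitive conflict is $\mu_G(v)$ vs.\ $\mu_G(v')$ when $d(v')=2$, where at most one of the two choices of $\ell(uv)\in\{1,2\}$ forces $\mu_G(v)=\mu_G(v')$, so the other choice works. For (C2), I would delete both $2$-vertices $u$ and $v$ at once; with the five binary choices $\ell(u), \ell(v), \ell(uu'), \ell(uv), \ell(vv')$ to extend, and again most conflicts ruled out by size-mismatch, a valid extension is easily picked.

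For (C3), I would delete the $\lfloor k/2\rfloor$ $2^-$-neighbors $v_1,\dots,v_{\lfloor k/2\rfloor}$ of $u$ altogether, label the resulting $G'$ by minimality, and extend by choosing labels for each $v_i$ and its incident edges (one or two edges, according to whether $v_i$ has degree $1$ or $2$ in $G$). The cardinality argument immediately kills conflicts between $u$ and any $v_i$, since $|\mu_G(u)|=k+1\geq 4>|\mu_G(v_i)|$; the remaining conflicts to resolve are (i) between $u$ and any of its non-$2^-$-neighbors $w_j$ in the case $d(w_j)=k$, which I would avoid by adjusting some $\ell(uv_i)$, and (ii) between a $2$-vertex $v_i$ and its other neighbor $v_i'$ (and, by ripple effect, between $v_i'$ and its other neighbors, whose incident multisets also shift), which can be resolved by judicious choice of $\ell(v_iv_i')$ and $\ell(v_i)$. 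The same scheme, with the neighbor of degree $k'\notin\{1,2,k\}$ serving as an ``anchor'' whose multiset differs from $\mu_G(u)$ by size, will handle (C4).

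The main obstacle I anticipate is the bookkeeping for (C3) and (C4) when the forbidden multisets overlap: a single choice of $\ell(uv_i)$ affects both $\mu_G(u)$ and $\mu_G(v_i)$, and a single choice of $\ell(v_iv_i')$ affects both $\mu_G(v_i)$ and $\mu_G(v_i')$, so one must verify that the binary degrees of freedom collectively dominate the number of local constraints. This amounts to a case analysis in each configuration, and is the place where the precise structural hypotheses (the exact count $\lfloor k/2\rfloor$ of $2^-$-neighbors, the definition of a \emph{weak} vertex, and the restriction $k'\neq k$ in (C4)) become essential.
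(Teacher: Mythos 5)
Your reduction for C1 matches the paper's. For C2 and C3, however, you delete \emph{vertices} where the paper deletes \emph{edges}, and this is where your argument has a genuine gap. In C3, removing a $2$-vertex $v_i$ also removes the edge $v_iv_i'$ from $G'$, so when you extend you must choose $\ell(v_iv_i')$, and that choice perturbs $\mu(v_i')$. The only dial affecting $\mu(v_i')$ is that single binary label (note that $\ell(v_i)$ does not enter $\mu(v_i')$), whereas $v_i'$ may have two or more \emph{other} neighbours of its own degree whose fixed multisets realise both candidate values of $\mu(v_i')$; the ``ripple effect'' you flag is therefore not resolvable with the degrees of freedom you list. The same problem occurs at $u'$ and $v'$ in your C2 reduction. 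The paper's fix is to delete only the edges $uv_1,\dots,uv_{\lfloor k/2\rfloor}$ (respectively, only the edge $uv$ in C2): then every vertex at distance two from the configuration keeps its multiset verbatim, each $v_i$ is separated from its second neighbour by a degree mismatch (using that C1 and C2 are already excluded, so $d(v_i')\geq 3>d(v_i)$), and the only genuine constraint left is $u$ against its $\lceil k/2\rceil$ neighbours $w_j$.

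That last constraint is the second thing missing from your proposal: you defer the count to ``a case analysis'', but it is exactly the crux. The paper observes that by choosing $\ell(u)$ and the $\lfloor k/2\rfloor$ labels $\ell(uv_i)$ one can realise $\lfloor k/2\rfloor+2$ distinct values of $\mu(u)$ (only the number of red elements among these $\lfloor k/2\rfloor+1$ free items matters), and $\lfloor k/2\rfloor+2>\lceil k/2\rceil$ for all $k\geq 3$, so some value avoids the multisets of all the $w_j$'s; for C4 the neighbour of degree $k'\notin\{1,2,k\}$ simply removes one constraint. Finally, for C2 you also need the paper's preliminary ``sliding'' step to select an instance with $d(u')\geq 3$; without it, the conflict between $u$ and $u'$ is not killed by a degree mismatch.
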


\begin{proof}
We treat the configurations one by one.
As we go along, it is assumed that $G$ cannot contain any of the previously-treated configurations.

\begin{itemize}
	\item Configuration C1.
	
	Assume $G$ has a $1$-vertex $u$ with a $2$-neighbour $v$ (and second neighbour $v'$).
	We here consider $G'=G-u$, and a multiset-proper total $2$-labelling of $G'$ (which exists by minimality of $G$).
	To extend it to one of $G$, and thus get a contradiction,
	we assign any colour to $uv$ so that $\mu(v) \neq \mu(v')$,
	and any colour to $u$.
	Since $d(u) \neq d(v)$, recall that $\mu(u) \neq \mu(v)$.
	Thus, we are done here.

	\item Configuration C2.
	
	Assume $G$ contains a $2$-vertex $u$ with a $2$-neighbour $v$.
	Let us denote by $u'$ and $v'$ the second neighbour of $u$ and $v$, respectively.
	Since $G$ cannot be just a cycle (as clearly the claim holds for cycles),
	we can ``slide'' along the hanging path containing $u$ and $v$
	until we get to a place where we can assume $d(u') \geq 3$ (while still having $d(u)=d(v)=2$).
	Recall in particular that $G$ cannot contain Configuration C1.
	We here consider $G'=G-uv$, and a multiset-proper total $2$-labelling.
	To extend it to $G$, we first assign some colour to $uv$ so that $\mu(v) \neq \mu(v')$.
	Then, we change the colour of $u$ so that $\mu(u) \neq \mu(v)$.
	We are then done, since $\mu(u') \neq \mu(u)$ because $d(u') \neq d(u)$;
	this is a contradiction.

	\item Configuration C3.
	
	Assume $G$ contains a $k$-vertex $u$ adjacent to $\lfloor k/2 \rfloor$ $2^-$-vertices $v_1,\dots,v_{\lfloor k/2 \rfloor}$
	and to $\lceil k/2 \rceil$ other vertices $w_1,\dots,w_{\lceil k/2 \rceil}$. We here consider the graph $G'=G-\{uv_1,\dots,uv_{\lfloor k/2 \rfloor}\}$
	and a multiset-proper total $2$-labelling.
	We extend it to $G$ as follows.

	First off, note that, because $G$ does not contain Configuration C2, when colouring any $uv_i$, we do not have to care about the second neighbour of $v_i$
	(which cannot have the same degree as $v_i$, and thus the same multiset -- then, $2$-vertices behave as $1$-vertices).
	Also, since $d(u) \geq 3$, note that the multiset of $u$ cannot be the same as that of one of the $v_i$'s.	
	Now, just note that, upon assigning a colour to $u$ and all of $uv_1,\dots,uv_{\lfloor k/2 \rfloor}$,
	we can alter $\mu(u)$ in $\lfloor k/2 \rfloor+2$ different ways.
	In particular, one of these modified multisets does not appear as the multiset of any $w_i$.
	So it suffices to colour $u$ and $uv_1,\dots,uv_{\lfloor k/2 \rfloor}$ this way,
	and to assign any colour to $v_1,\dots,v_{\lfloor k/2 \rfloor}$.

	\item Configuration C4.
	
	This can be proved the exact same way as Configuration C3. 
	Essentially, this is because the neighbour of $u$ with different degree (not in $\{1,2,k\}$) cannot be in conflict with $u$ (due to the different degrees);
	thus, there is one less constraint when extending the colouring.

\end{itemize}

This proves the claim.
\end{proof}

We now progress towards proving that $G$ cannot exist, given that $\mad(G) \leq 3$ and that $G$ does not contain any of Configurations~C1 to C4.
Let us assign, to every vertex $v$ of $G$, an initial charge $\omega(v)=d(v)-3$.
Since
$$\sum_{v \in V(G)} \omega(v) = \sum_{v \in V(G)}\left(d(v) - 3\right) = \sum_{v \in V(G)}d(v) - 3 \cdot |V(G)|$$ 
and
$$\sum_{v \in V(G)}d(v) \leq |V(G)| \cdot {\rm \mad}(G) \leq 3 \cdot |V(G)|,$$
the total amount of charges is negative or zero.

Let us now carry the following discharging process, moving charges between neighbours according to the following rules.

\begin{itemize}
	\item[(R1)] Every $4^+$-vertex sends~$2$ to each of its $2^-$-neighbours.
\end{itemize}

To describe the next rules, we need some more terminology.
Consider any even $k \geq 4$. We denote by $G_k$ the bipartite graph with bipartition $(W_k,S_k)$,
where $W_k$ contains all weak $k$-vertices of $G$, and $S_k$ contains all strong $k$-vertices.
Note that $G_k$ is not necessarily connected.
We claim that $|S_k| \geq |W_k|$.
Indeed, we have $2|E(G_k)| \geq (k/2+1)|W_k|$, since $G$ does not contain Configuration C4.
And $|V(G_k)|=|W_k|+|S_k|$.
Thus, $$3 \geq \frac{2|E(G_k)|}{|V(G_k)|} \geq \frac{|W_k|(k+2)}{|W_k|+|S_k|},$$
so $$3(|W_k|+|S_k|) \geq |W_k|(k+2)$$
and thus $$|S_k| \geq \frac{1}{3}|W_k|(k+2).$$
Note that $k+2 \geq 6$ since $k \geq 4$.
Thus, $|S_k| \geq |W_k|$.
Let us also point out (this will be important later on) that $|S_k| > |W_k|$ for every even $k \geq 6$.

For every even $k \geq 4$, we can thus come up with a pairing $\mathcal{M}_k$ of the vertices of $W_k$ with those of $S_k$,
in a way where vertices of $S_k$ are paired with at most one vertex of $W_k$.
Assume $(u,v) \in \mathcal{M}_k$ is a pair (where $u$ is weak and $v$ is strong),
$u$ is said to be the \textit{supporter} of $v$, and that $v$ is \textit{supported} by $u$.

Now, the second rule reads as follows.

\begin{itemize}
	\item[(R2)] Any supporting strong $k$-vertex sends $1$ to the $k$-neighbour it supports.
\end{itemize}

\noindent Denoting by $\omega^*(v)$ the resulting charge of any vertex $v$ once the rules above have been applied to all vertices,
we have the following analysis.

\begin{itemize}
	\item $d(v)=1$. 
	
	Note that $\omega(v)=-2$ and that $v$ did not send any charge by rules R1 and R2.
	Since $G$ does not contain Configurations C1 and C3, the neighbour of $v$ is a $4^+$-vertex,
	which sent~$2$ to $v$ by rule R1. Thus, $\omega^*(v)=\omega(v)+2=0$.
	
	\item $d(v)=2$.
	
	Note that $\omega(v)=-1$ and that $v$ did not send any charge by rules R1 and R2.
	Since $G$ does not contain Configurations C1, C2, and C3, the two neighbours of $v$ are $4^+$-vertices,
	which both sent~$2$ to $v$ by rule R1. Thus, $\omega^*(v)=\omega(v)+4=3$.
	
	\item $d(v)=3$.
	
	Note that $v$ neither sent nor received any charge through rules R1 and R2.
	Thus, we have $\omega^*(v)=\omega(v)=0$.
	
	\item $d(v) = k \geq 4$.
	
	\begin{itemize}
		\item $k \geq 5$ is odd.
		
		Recall that $\omega(v)=k-3$, and note that $v$ did not receive any charge through rules R1 and R2 (in particular, $k$ is odd, so there is not notion of supporting vertices).
		On the other hand, $v$ sent~$2$ to each of its $2^-$-neighbours, by rule R1,
		while $v$ is adjacent to at most $\lfloor k/2 \rfloor -1$ $2^-$-neighbours, since $G$ does not contain Configuration C3.
		Since $k$ is odd, note that $\lfloor k/2 \rfloor -1$.
		Thus, $\omega^*(v) \geq \omega(v) - 2 \times (\lfloor k/2 \rfloor -1)=k-3- 2 \times (\lfloor k/2 \rfloor -1)=k-3-(k-1)+2=0$.
		
		\item $k \geq 4$ is even. Recall that $\omega(v)=k-3$.
		
		\begin{itemize}
			\item Assume first that $v$ is weak. Here, recall that $v$ is adjacent to exactly $(k/2-1)$ $2^-$-neighbours,
			to each of which $v$ sent~$2$ by rule R1. Also, by rule R2, $v$ received~$1$ from its adjacent supporting strong $k$-neighbour.
			Thus, $\omega^*(v)=\omega(v)-(2 \times (k/2-1))+1=k-3-k+2+1=0$.
			
			\item Assume last that $v$ is strong.
			Here, $v$ is adjacent to at most $(k/2-2)$ $2^-$-neighbours, to each of which $v$ sent~$2$ by rule R1.
			Also, by rule R2, $v$ might have sent~$1$ to an adjacent weak $k$-neighbour it supports.
			Thus, $\omega^*(v) \geq \omega(v)-(2 \times (k/2-2))-1=k-3-k+4-1=0$.
		\end{itemize}
	\end{itemize}
\end{itemize}

We thus deduce that $\omega^*(v) \geq 0$ for all vertices $v$ of $G$, thus that the total amount of charges is positive or zero.
If $\mad(G)<3$, then the total amount of initial charges is actually strictly negative, and we get a contradiction.
So it remains to consider when $\mad(G)=3$, and, to be done, we need to refine our analysis by a bit,
by showing that there is some vertex $v$ of $G$ with $\omega^*(v)>0$.

As noted earlier, note that, for every $2$-vertex $v$, we have $\omega^*(v)>0$;
thus, we have our contradiction in case $G$ contains $2$-vertices.
Assume now this is not the case.
If $\Delta(G) \leq 5$, then the result follows from Theorem~\ref{theorem:max-degree-5}.
Thus, we can further assume $\Delta(G) \geq 6$.
Now let $G^-$ be the graph obtained from $G$ by deleting all $1$-vertices.
Since $G$ does not contain Configurations C3,
we have $\delta(G^-) \geq 3$, and that $G^-$ contains $4^+$-vertices;
from this, we deduce that $\mad(G^-) > 3$, which contradicts that $\mad(G) \leq 3$.

\section{Conclusion}\label{section:ccl}

In this work, we considered variants of the 1-2 Conjecture for products and multisets, the former of which had already been studied, while the latter one had not been considered as such to date. We mostly established two series of results. On the one hand, we proved the product version of the 1-2 Conjecture for graphs with maximum degree at most~$6$ (through Theorems~\ref{theorem:max-degree-4} to~\ref{theorem:max-degree-6}), while we proved, on the other hand, that the multiset version of the 1-2 Conjecture holds for graphs with maximum average degree at most~$3$. Due to relationships between the sum, product, and multiset variants of the 1-2 Conjecture mentioned in the introductory section, as side results we also get that the sum version of the 1-2 Conjecture holds for $4$-regular graphs (which was already known), $5$-regular graphs, and $6$-regular graphs; we also get that the multiset version of the 1-2 Conjecture holds for graphs with maximum degree at most~$6$. It is worth point out also that all arguments in the proofs of Theorems~\ref{theorem:max-degree-4} to~\ref{theorem:max-degree-6} are constructive, and thus yield polynomial-time algorithms for designing corresponding total $2$-labellings.

The sum and product versions of the 1-2 Conjecture, which had already been introduced prior the current work, are mostly open. This was our main motivation for introducing and studying the easier, multiset version of the conjecture. It turns out, however, that most of our results on the multiset version of the 1-2 Conjecture actually follow from results established for the product version, which, although more complicated, is easier to deal with in our opinion. There are contexts, however, where, as expected, dealing and distinguishing through multisets is easier; a perfect illustration is the fact that proving a result alike Theorem~\ref{theorem:mad3} for products through the same proof scheme leads to different reducible configurations, which make the discharging process unclear.

Following our work, we believe there are several appealing directions for further work on the topic. All of the sum, product, and multiset versions of the 1-2 Conjecture are open in general, so any sort of progress towards these would be interesting. Regarding Theorems~\ref{theorem:max-degree-4} to~\ref{theorem:max-degree-6}, it would be interesting to establish sum counterparts of these results; and to try to push these results further, for graphs with larger maximum degree. A particular case to consider could be that of regular graphs, for which all three variants of the 1-2 Conjecture are equivalent. We believe it would also be interesting to push Theorem~\ref{theorem:mad3} further, either through considering the multiset version and graphs with larger maximum average degree; or trying to prove similar results for the sum and product versions of the 1-2 Conjecture.

\section*{Acknowledgement}
The second author is supported by Projet ANR GODASse, Projet-ANR-24-CE48-4377. The third author is supported by the Chinese Scholarship Council (CSC), and wants to thank Stéphan Thomassé for hosting his visit at ENS Lyon.

\end{document}